\newtheorem{Theorem}[equation]{Theorem}
\newtheorem{Corollary}[equation]{Corollary}
\newtheorem{Lemma}[equation]{Lemma}
\newtheorem{Proposition}[equation]{Proposition}
\theoremstyle{definition}
\newtheorem{Definition}[equation]{Definition}
\newtheorem{Example}[equation]{Example}
\newtheorem{Notation}[equation]{Notation}
\newtheorem{Question}[equation]{Question}
\newtheorem{Remark}[equation]{Remark}
\numberwithin{equation}{section}
\numberwithin{figure}{section}
\newcommand{\PP}{{\mathbb P}}
\newcommand{\C}{{\mathbb C}}
\newcommand{\Z}{{\mathbb Z}}
\newcommand{\Q}{{\mathbb Q}}
\newcommand{\rs}{\mathcal{R}_{G}}
\newcommand{\bs}{\mathcal{B}_{G}}
\newcommand{\mc}[1]{\mathcal{#1}}
\newcommand{\beq}{\begin{equation}}
\newcommand{\eeq}{\end{equation}}
\begin{document}

\title{On the Borel Submonoid of a Symplectic Monoid}

\author{Mahir Bilen Can, Hayden Houser, Corey Wolfe}

\affil{\small{
Department of Mathematics, Tulane University,\\
6823 St. Charles Ave, New Orleans, LA, 70118,\\}} 
%Fax: 1(504) 865 5063, 
%mahirbilencan@gmail.com}}

\maketitle

\begin{abstract}

In this article, we study the Bruhat-Chevalley-Renner order on the complex symplectic monoid $MSp_n$.
After showing that this order is completely determined by the Bruhat-Chevalley-Renner order on the 
linear algebraic monoid of $n\times n$ matrices $M_n$, we focus on the Borel submonoid of $MSp_n$.
By using this submonoid, we introduce a new set of type B set partitions.
We determine their count by using the ``folding'' and ``unfolding'' operators that we introduce. 
We show that the Borel submonoid of a rationally smooth reductive monoid with zero is rationally smooth. 
Finally, we analyze the nilpotent subsemigroups of the Borel semigroups of $M_n$ and $MSp_n$.
We show that, contrary to the case of $MSp_n$, the nilpotent subsemigroup of the Borel submonoid of $M_n$ is irreducible. 
\vspace{.5cm}

\noindent 
\textbf{Keywords: Symplectic monoid, Renner monoid, Borel submonoid, rationally smooth, set partitions, (un)folding}

\noindent 
\textbf{MSC: 20M32, 20G99, 06A99} 
\end{abstract}

\normalsize

\section{Introduction}

Let $M$ be a complex reductive monoid with unit group $G$, and let $B$ be a Borel subgroup in $G$. 
Then we have a square of inclusions as in the following diagram  
\[
\begin{tikzcd}
  \overline{B}  \arrow[hookrightarrow]{r} & M \\
B \arrow[hookrightarrow]{u} \arrow[hookrightarrow]{r}  & G \arrow[hookrightarrow]{u} 
\end{tikzcd}
\]
where $\overline{B}$ is the Zariski closure of $B$ in $M$; we will call $\overline{B}$ the {\em Borel submonoid} determined by $B$. 
Although its combinatorics and geometry are relatively less explored compared to that of the ambient reductive monoid,  
the Borel submonoid is a very important object for the study of the representation theory of $M$~\cite[Theorem 3.4]{Doty99}.
In the special case of the linear algebraic monoid of $n\times n$ matrices, the $B\times B$-orbits 
in $\overline{B}$ are parametrized by the set partitions of $\{1,\dots, n\}$, 
providing a gateway to an unchartered domain for combinatorialists, see~\cite{CanCherniavsky}. 
In this regard, our goal in this paper is to present first combinatorial results, whose analogous versions are obtained in~\cite{CanCherniavsky}, for the Borel submonoid of a ``symplectic monoid'' that we will define next.

Let $l$ be a positive integer, and set $n:=2l$. 
The set of all $l\times l$ matrices with entries from $\C$ will be denoted by $M_l$. 
We let $J$ denote the $n\times n$ matrix, 
$
J=
\begin{bmatrix}
0 & J_l \\
-J_l & 0 
\end{bmatrix},
$
where $J_l$ is the unique antidiagonal $l\times l$ permutation matrix, that is, 
\[
J_l=
\begin{bmatrix}
0 & 0 & \cdots &0 & 1  \\
0 & 0 & \cdots &1 & 0  \\
\vdots & \vdots & \iddots& \vdots &\vdots  \\
0 & 1 & \cdots & 0 & 0  \\
1 & 0 & \cdots & 0 & 0  \\
\end{bmatrix}.
\]
The {\em symplectic group} is defined by $Sp_n := \{ A\in GL_n :\ A^\top J A = J \}$.
This is the group of linear automorphisms of $\C^n$ that preserve the skew-bilinear form that is defined by $J$. 
(Once we fix the even integer $n=2l$, in the sequel, it will be convenient for us to denote $Sp_n$ by $G$.)
Let us denote the central extension of $Sp_n$ in $GL_n$ by $GSp_n$. 
This is the smallest reductive subgroup of $GL_n$ that contains both of the subgroups $G$ and 
the group of invertible scalar matrices $\{ c I_n :\ c\in \C^*\}$, where $I_n$ denotes the $n\times n$ identity matrix. 
The Zariski closure of $GSp_n$ in $M_n$ is called the {\em $n$-th symplectic monoid}.
Such monoids were first considered by Grigor'ev~\cite{Grigorev1981}.
The following concrete description of the $n$-th symplectic monoid, which we will denote by $MSp_n$,
is due to Doty~\cite[Proposition 4.3]{Doty98}: $MSp_n:= \{ A\in M_n :\ A^\top J A=A J A^\top = cJ,\ c\in \C \}$.
Basic geometric ingredients (the Renner monoid, the cross section lattice, and a cell decomposition) of $MSp_n$ 
are described explicitly by Li and Renner in~\cite{LiRenner}. 
An in-depth analysis of the rational points of $MSp_n$ over finite fields, 
including some fascinating combinatorial formulas about its Renner monoids, 
are described by Cao, Lei, and Li in~\cite{CaoLeiLi}. 
To describe the main results of our paper, next, we will briefly review the Renner monoid of $MSp_n$ in relation 
with the rook monoid.

To keep our notation simple, let us denote by $B$ the Borel subgroup consisting of the upper triangular matrices in $GSp_n$.
The {\em natural action} of $B\times B$ on $MSp_n$ is defined by 
$(b_1,b_2) \cdot x  = b_1 x b_2^{-1}$, where $b_1,b_2\in B,\ x \in MSp_n$.
This action has finitely many orbits~\cite{Renner86, LiRenner} and moreover the orbits are parametrized by a finite inverse semigroup, 
$MSp_n = \bigsqcup_{\sigma \in \mathcal{R}_G} B \sigma B$. 
The finite inverse semigroup $\mathcal{R}_G$ is called the {\em symplectic Renner monoid};
it is the symplectic version of the {\em rook monoid} $\mathcal{R}_n$, which consists of 0/1 square matrices 
of size $n$ with at most one 1 in each row and each column. In fact, $\mathcal{R}_G$ is a submonoid of $\mathcal{R}_n$. 
The elements of $\mathcal{R}_n$ are called rooks, and we will call the elements of $\mc{R}_G$ the {\em symplectic rooks}.
The {\em Bruhat-Chevalley-Renner order} on $\mathcal{R}_n$ is defined by 
\begin{align}\label{A:BCR}
\sigma \leq \tau \iff  B_n \sigma B_n \subseteq  \overline{B_n \tau B_n}
\end{align}
for $\sigma,\tau \in \mathcal{R}_n$. 
(We will introduce the most general form of the Bruhat-Chevalley-Renner order in the preliminaries section.)
An explicit combinatorial description of $\leq$, in the spirit of Deodhar's criteria, is obtained in~\cite{CanRenner12}.
By using this explicit characterization of $\leq$, it is shown in~\cite{Can19:Shellable} that $(\mathcal{R}_n,\leq)$ is a graded, bounded, EL-shellable poset.

The first main observation in our paper, Theorem~\ref{T:inrsn}, states that, for $\sigma, \tau \in \mathcal{R}_G$, we have 
\begin{align*}
\sigma \leq \tau \text{ in } \mathcal{R}_G \iff  \sigma \leq \tau \text{ in } \mathcal{R}_n.
\end{align*}
An important family of subposets of $\mathcal{R}_n$ are defined as follows. Let $k$ be an integer in $\{0,1,\dots, n\}$, and let 
\[
\mathcal{B}_n(k):= \{ \sigma \in \mathcal{R}_n:
\text{$\sigma$ is upper triangular and rank$(\sigma)=k$}\} 
\]
and
\[
\mc{B}_n := \bigsqcup_{k=0}^n \mc{B}_n(k).
\]
Then $\mc{B}_n$ parametrizes the $B_n\times B_n$-orbits in the Borel monoid $\overline{B_n}$. 
Actually, $\mc{B}_n$ is the lower interval $[0,1]= \{ x\in \mathcal{R}_n :\ x\leq 1\}$ in $\mc{R}_n$. 
Therefore, $\mc{B}_n$ is also EL-shellable. 
Generalizing this observation, in~\cite{CanCherniavsky}, joint with Cherniavsky, 
the first author showed that each poset $(\mc{B}_n(k),\leq)$ ($k\in \{0,1,\dots, n\}$) is 
a graded, bounded, EL-shellable poset. 
In fact, it turns out that $(\mc{B}_n(k),\leq)$ is a union of ${n\choose k}$ maximal subintervals all of which have the same minimum element.
An important combinatorial aspect of this development is that, as a set, $\mc{B}_n(k)$ is in bijection with 
the set partitions of $\{1,\dots, n+1\}$ with $k$ blocks. In particular, the cardinality $|\mc{B}_n(k)|$ is given by the 
Stirling numbers of the second kind, $S(n+1,k)$.  
In our second main result, we obtain similar results for the rank $k$ elements of the Borel submonoid $\overline{B}$ in $MSp_n$. 
We should mention that the type BC analogs of the set partitions with respect to ``refinement order'' is well known~\cite{Reiner97}.
For a more recent study of their combinatorial properties, see~\cite{BagnoBiagioliGarber}.

We will denote by $\mc{B}_G$ the submonoid of all upper triangular elements in the symplectic Renner monoid $\mc{R}_G$. 
In other words, $\mc{B}_G = \{ x\in \mc{R}_G:\ x\leq 1\} = [0,1]$ in $\mc{R}_G$.  
The {\em $k$-th symplectic Stirling poset}, denoted by $\mc{B}_G(k)$, is the subposet defined by 
\begin{align}
\mc{B}_G(k) := \{ x \in \mc{B}_G :\ \text{rank} (x) = k\}.
\end{align}
In Theorem~\ref{T:maxelements}, we prove that the $k$-th symplectic Stirling poset is a graded bounded poset with unique minimum element,
and with ${l \choose k} 2^k$ maximal elements, all of which are rank $k$ diagonal idempotents. 
It is now a natural question to find the cardinality of each of the posets $\mc{B}_G(k)$. 
We answer this question in Theorem~\ref{T:formula}. 
It turns out that 
\begin{align*}
|\mc{B}_G(k) | = \sum_{a+b+c =k} 2^{a+c}3^b   {l \choose b} S(l+1, l+1-a) S(l+1, l+1-c),
\end{align*}
where $(a,b,c)\in \Z_{\geq 0}^3$. Here, for integers $s,t\in \Z$ such that $s\leq t$, 
$S(s,t)$ stands for the $(s,t)$-th Stirling numbers of the second kind.

Reductive monoids are regular in the semigroup theory sense. 
Geometrically, the only smooth reductive monoids with one-dimensional center
are the monoids of $n\times n$ matrices~\cite{Renner85}. 
A complex algebraic variety of dimension $n$ is called {\em rationally smooth} if for every $x\in X$, 
the local cohomology groups $H^i(X,X\setminus \{x\})$ are zero for $i\neq 2n$, and $H^{2n}(X,X\setminus \{x\}) = \Q$. 
It turns out that the rationally smooth reductive monoids have rich combinatorial and geometric structures~\cite{Renner08,RennerDescent}. 
Their classification has been completed by Renner~\cite{Renner08,Renner09}. 
In particular, $MSp_n$ is a rationally smooth monoid. 
Gonzales showed that the rationally smooth reductive monoids are GKM manifolds, see~\cite{Gonzales14}. 
This means that the relevant (equivariant) cohomological data for such a monoid can be recovered from the knowledge of 
torus invariant points and curves alone. 

In Theorem~\ref{T:rationallysmooth}, we show that, if $M$ is a rationally smooth reductive monoid with zero, 
and $\overline{B}$ is a Borel submonoid in $M$, then $\overline{B}$ is rationally smooth as well. 
Although we do not exploit this information here, we can now use Renner's $H$-polymomials for computing the intersection 
cohomology Poincar\'e polynomials of many Borel submonoids. 
In particular, this idea is applicable to the case of $MSp_n$. 
We plan to revisit this topic in a future paper.

We now describe the organization of our paper. 
In Section~\ref{S:Preliminaries} we briefly summarize some basic properties of the symplectic groups and monoids.
Section~\ref{S:BCR} is devoted to the study of the Bruhat-Chevalley-Renner order on $MSp_n$. 
In this section we prove our first result, Theorem~\ref{T:inrsn}.
Empowered by the concrete description of the partial order, 
we begin our study of the Borel submonoid of $MSp_n$ in Section~\ref{S:MSp_n}.
In particular, in this section, we give a count of the number of rank $k$ elements of the symplectic upper triangular rooks, Theorem~\ref{T:formula}. 
The purpose of Section~\ref{S:RationallySmooth} is to show that the Borel submonoids of 
rationally smooth reductive monoids with zeros are rationally smooth, Theorem~\ref{T:rationallysmooth}.
In the final part of our paper, we return to our study of the symplectic monoids. 
We observe that, unlike the case of the monoid of $n\times n$ matrices, 
the subsemigroup of nilpotent elements of the Borel submonoid of $MSp_n$ is not irreducible for $n\geq 2$. 
\\

\textbf{Acknowledgements.}
The authors thank Yonah Cherniavsky and Zhenheng Li.
The first author is partially supported by a grant from the Louisiana Board of Regents.

\section{Preliminaries}\label{S:Preliminaries}

In this section we will review the basic ingredients of our objects. 

\subsection{Symplectic groups.}
Let $n$ be a positive integer of the form $n=2l$ for some $l\in \Z$. 
Let us denote by $H$ (resp. $G$) the special linear group $SL_n$ (resp. the symplectic group $Sp_n$).
Then $G\subseteq H$ with equality if $n=2$. 
We will denote by $T_H$ and $B_H$ the maximal diagonal torus and the Borel subgroup consisting of upper triangular matrices in $H$, respectively. 
Then the intersections $T_G:=G\cap T_H$ and $B_G := G\cap B_H$ are, respectively, 
the maximal diagonal torus and a Borel subgroup containing $T_G$ in $G$. 

Let $\theta : H \to H$ denote the following involutory automorphism:
\[
\theta (A) = J (A^\top)^{-1} J^{-1} \qquad A\in H.
\]
Then the fixed subgroup of $\theta$ in $H$ is $G$. In other words, we have $H^\theta = G$. 
Also, it is easy to verify that $B_G = B_H^\theta$ and that $T_G = T_H^\theta$.
With this choice of $T_H$ and $B_H$, we know that the normalizer of $T_H$ in $H$, 
that is, $N_H(T_H)$ is equal to the $n\times n$ monomial matrices in $H$, 
and the elements of the Weyl group, $W_H:=N_H(T_H)/T_H$, are represented by the 
permutation matrices of size $n$. 
We will denote $W_H$ by $\mathcal{S}_{n}$. 
The {\em one-line notation} of an element $w$ of $S_n$ is the sequence $(w_1,\dots, w_n)$, 
where $w_i = w(i)$ for $i\in \{1,\dots, n\}$. 
In this notation, the Weyl group of $(G,T_G)$ has a convenient description as the fixed point 
subgroup of the induced involution, $\theta : S_n \to S_n$ which is defined by
\[
\theta(w) := ( n+1 - w_n, n+1 - w_{n-1},\dots, n+1 - w_1) \qquad w\in S_n.
\]
In other words, we have 
\[
W_G = \{ w\in S_n :\ \theta(w) = w \}.
\]
By working with the root system corresponding to $(G,B_G,T_G)$, 
one knows that $(W_G,S_G)$, as a Coxeter group, is generated by 
\[
S_G=\{ r_i r_{n-i} :\ 1\leq i \leq l-1 \} \cup \{ r_l \},
\]
where $r_j$ ($j\in \{1,\dots, n-1\}$) denotes the simple transposition $r_j = (j, j+1)$ in $S_n$. 
Let us define $s_1,\dots, s_l$ by setting
\begin{align}\label{A:s}
s_j :=
\begin{cases}
r_j r_{n-j} & \text{ if $j\in \{1,\dots, l-1\}$;}\\
r_l & \text{ if $j=l$.}\\
\end{cases}
\end{align}
In this notation, the Coxeter-Dynkin diagram of $(G,B_G,T_G)$ can be depicted as in Figure~\ref{F:Directed}.
This labeling is consistent with the labeling that is given in~\cite{Bourbaki456}.
\begin{figure}[htp]
\begin{center}
\scalebox{1}{
\begin{tikzpicture}
\node (r1) at (-2,0) {$s_1$};
\node (r2) at (0,0) {$s_2$};
\node (r3) at (2,0) {};
\node (r4) at (2.5,0) {$\dots$};
\node (r5) at (3,0) {};
\node (r6) at (5,0) {$s_{l-1}$};
\node (r7) at (7.25,0) {$s_{l}$};
%\draw[-, line width=1pt] (-.25,0) -- (-1,0);
\draw[-, line width=1pt] (r1) -- (r2);
\draw[-, line width=1pt] (r2) -- (r3);
\draw[-, line width=1pt] (r3) -- (r4);
\draw[-, line width=1pt] (r4) -- (r5);
\draw[-, line width=1pt] (r5) -- (r6);
\draw[-, line width=1.1pt]  (5.5,0.05) -- (7,0.05);
\draw[-, line width=1.1pt]  (5.5,-0.05) -- (7,-0.05);
\end{tikzpicture}
}
\caption{The Coxeter-Dynkin diagram of type $\textrm{C}_l$.}
\label{F:Directed}
\end{center}
\end{figure}

\subsection{Symplectic monoids.}

Let $M$ be a reductive monoid with unit group $G$. 
Then, by definition, $G$ is a connected reductive group.
Let $B$ be a Borel subgroup in $G$, and let $T$ be a maximal torus of $G$ that is contained in $B$. 
Then the Weyl group of $G$ is given by $W:=N_G(T)/T$. 
The Bruhat-Chevalley decomposition of $G$ is the finite decomposition $G= \bigsqcup_{w\in W} B \dot{w} B$. 
Likewise, the $B\times B$-orbits in $M$ are parametrized by a finite inverse semigroup, which is called the {\em Renner monoid} of $M$;
it is defined as the quotient $R:=\overline{N_G(T)}/T$, where $\overline{N_G(T)}$ denotes the Zariski closure of $N_G(T)$ in $M$. 
Then the {\em Bruhat-Chevalley-Renner decomposition} of $M$ is given by 
\[
M= \bigsqcup_{r\in R} B \dot{r} B.
\]
The dot on $r$ indicates that we are choosing a representative of $r$ from $\overline{N_G(T)}$. 
In general, it is not true that $R$ is a submonoid of $M$. 
An excellent survey of the Renner monoids of classical monoids can be found in~\cite{CLL_Fields}.

\begin{Notation}
Let $G$ denote, as before, the symplectic group $Sp_n$. Then 
the Renner monoid of the symplectic monoid $MSp_n$ will be denoted by $\rs$. 
The Weyl group of $G$ will be denoted by $W_G$. 
\end{Notation}

1) Let $\theta$ denote the involution that we introduced before, 
that is, $\theta(i) = n -i +1$ for $i\in \{1,\dots, n\}$. 
A subset $S\subseteq \{1,\dots, n\}$ is called an {\em admissible subset} if $\theta (S) \cap S =\emptyset$. 
For $i,j\in \{1,\dots, n\}$, let $E_{ij}$ denote the $(i,j)$-th elementary matrix.
Then the $(k,l)$-th entry of $E_{ij}$ is 1 if $(k,l)=(i,j)$, and it is 0 if $(k,l)\neq (i,j)$. 
In~\cite[Theorem 3.1.8]{LiRenner}, it is shown that 
\begin{align}
\rs= 	\left\{ \sum_{i\in I,w\in W_G} E_{i,wi} :\ I \text{ is admissible} \right\}.
\end{align}

2) 
An {\em injective partial transformation on $\{1,\dots, n\}$} is an injective map $f: D \to R$,  
where $D=D(f)$ and $R=R(f)$ are two subsets from $\{1,\dots, n\}$ with equal cardinalities. 
\begin{Definition}
The set of all injective partial transformations on $\{1,\dots, n\}$ is called the {\em rook monoid};
we will denote it by $\mathcal{R}_n$. As we mentioned before, $\mc{R}_n$ is the Renner monoid of $M_n$. 
\end{Definition}
In~\cite[Theorem 3.1.10]{LiRenner}, Li and Renner show that 
\begin{align}\label{A:LiRenner}
\rs=	\left\{x \in \mathcal{R}_n :\ \text{$D(x)$ and $R(x)$ are admissible, and $x$ is singular} \right\} \cup W_G.
\end{align}

3) 
For $k\in \{0,1,\dots, n\}$, let $e_k$ denote the diagonal idempotent, 
$e_k := E_{11} + E_{22}+\cdots + E_{kk}$. 
Also, let $e_0$ denote the $n\times n$ 0-matrix. 
The cross-section lattice of $\rs$ is then given by 
\begin{align}\label{A:crossG}
\Lambda_G := \{e_0,e_1,e_2,\dots,e_l, e_n\}.
\end{align}
(Notice the jump in the indices of the last two idempotents $e_l$ to $e_n$. This is not a typo!) 
In this notation, the Renner monoid of $MSp_n$ is given by 
\begin{align}
\rs = \bigsqcup_{e_i \in \Lambda } W_G e_i W_G.
\end{align}
Since $e_n$ is the identity element, the subset $W_Ge_nW_G$ is equal to $W_G$. 
Therefore, the rank of a singular element in $\rs$ is at most $l$.

\section{The Bruhat-Chevalley-Renner order}\label{S:BCR}

Let $M$ be a reductive monoid with unit group $G$. 
Let $B$ be a Borel subgroup in $G$, and let $T$ be a maximal torus in $G$ 
such that $T\subseteq B$. Let us denote the Renner monoid of $M$ by $R$. 
The {\em Bruhat-Chevalley-Renner order} on $R$ is the following partial order: for $x,y\in R$, 
\begin{align*}
x\leq_{BCR} y \iff BxB\subseteq \overline{ByB},
\end{align*}
where the bar over $ByB$ stands for the Zariski closure in $M$. 
Whenever it is clear from the context, we will omit writing the subscript $BCR$ in $\leq_{BCR}$.
Note that the restriction of $\leq$ to $W$ is known as the Bruhat-Chevalley order on $W$, which is defined by the same formulation, 
\begin{align*}
x\leq y \iff BxB\subseteq \overline{ByB}\ \text{ for $x,y\in W$},
\end{align*}
where the bar over $ByB$ stands for the Zariski closure in $G$.

The Weyl group $W$ is a graded poset with the rank function $\ell_W : W\to \Z_{\geq 0}$ defined by 
\[
\ell_W(w) = \dim BwB - \dim B\ \text{ for $w\in W$}.
\]
Note that $W$ is a Coxeter group and it has a system of Coxeter generators, denoted by $S$. 
For $w\in W$, $\ell_W(w)$ can also be defined as the minimal number of 
simple reflections $s_{i_1},\dots, s_{i_r}$ from $S$ with $w=s_{i_1}\cdots s_{i_r}$.
A subgroup that is generated by a subset $I\subset S$ will be denoted by $W_I$ and it will be called a 
{\it parabolic subgroup of $W$}. 
For $I\subseteq S$, we will denote by $D_I$ the following set:
\begin{align}
D_I:= \{ x\in W:\ \ell_W(xw ) = \ell_W(x) + \ell_W(w) \text{ for all } w\in W_I \}.
\end{align}

The {\em type-map}, $\lambda : \Lambda \to 2^S$, is defined by $\lambda(e) := \{s\in S:\ s  e= e s  \}$ for $e \in \Lambda$. 
The containment ordering between $G\times G$-orbit closures in $M$ 
is transferred via $\lambda$ to a sublattice of the Boolean lattice on $S$. 
Associated with $\lambda(e)$ are the following sets:
$\lambda_*(e) := \cap_{f\leq e} \lambda (f)$ and $\lambda^*(e):= \cap_{f\geq e} \lambda(f)$.
We define the subgroups 
\[
W(e):= W_{\lambda(e)},\qquad W_*(e) := W_{\lambda_*(e)},\qquad W^*(e) := W_{\lambda^*(e)}.
\]
Then we have 
\begin{enumerate}
\item $W(e)= \{ a\in W:\ ae=ea \}$,
\item $W^*(e)= \cap_{f\geq e} W(f)$,
\item $W_*(e)= \cap_{f\leq e} W(f) = \{ a\in W:\ ae= ea = e\}$.
\end{enumerate}
We know from~\cite[Chapter 10]{Putcha} that 
$W(e),W^*(e)$, and $W_*(e)$ are parabolic subgroups of $W$, and furthermore, we know that 
$W(e) \cong W^*(e) \times W_*(e)$. 
If $W(e) = W_I$ and $W_*(e) = W_K$ for some subsets $I,K\subset S$, then we define 
$D(e) := D_I$ and $D_*(e):= D_K$. 
\vspace{.25cm}

\textbf{Theorem/Definition (Pennell-Putcha-Renner):}
For every $x\in WeW$ there exist elements $a\in D_*(e), b\in D(e)$, which are uniquely determined by $x$, such that 
\begin{align}\label{A:std}
x= a e b^{-1}.
\end{align}
The decomposition of $x$ in (\ref{A:std}) will be called the {\em standard form of $x$}.
Let $e, f$ be two elements from $\Lambda$. It is proven in~\cite{PPR97} that 
if $x= a eb^{-1}$ and $y= cf d^{-1}$ are two elements in standard form in $R$, then 
\begin{align}\label{A:BCR}
x \leq y \iff e\leq f,\ a \leq cw,\ w^{-1} d^{-1} \leq b^{-1} \qquad \text{for some $w\in W(f)W(e)$.}
\end{align}
We will occasionally write $D(e)^{-1}$ to denote the set $\{ b^{-1} :\ b\in D(e)\}$.

\subsection{Deodhar's criteria.}

Our goal in this section is to present a practical description of the 
Bruhat-Chevalley-Renner order on the rook monoid.

Let us denote by $B_n$ the Borel subgroup of invertible upper triangular matrices in $M_n$.
The Renner monoid $\mathcal{R}_n$ is a graded poset with the following rank function~\cite{Renner86}:
\begin{equation*}
\ell( x )= \dim (B_n x B_n),\  x \in \mathcal{R}_n.
\end{equation*}
There is a combinatorial formula for computing the values of $\ell$~\cite{CanRenner12}.

We represent elements of $\mathcal{R}_n$ by $n$-tuples. 
For $x=(x_{ij}) \in \mathcal{R}_n$ we define the sequence $(x_1,\dots,x_n)$ by
\begin{equation}\label{E:oneline}
x_j = 
\begin{cases}
0  &\text{if the $j$-th column consists of zeros,}\\
i &\text{if $x_{ij}=1$.}
\end{cases}
\end{equation}
By abuse of notation, we denote both the matrix and the sequence $(x_1,\dots,x_n)$ by $x$.
For example, the associated sequence of the partial permutation matrix 
\begin{equation*}
x=\begin{bmatrix}  
0 & 0 & 0 & 0 \\
0 & 0 & 0 & 0 \\
1 & 0 & 0 & 0 \\ 
0 & 0 & 1 & 0
\end{bmatrix}
\end{equation*}
is $x=(3,0,4,0)$.

Next, we define a useful partial order on finite sets of integers. 
Let $\{i_1,\dots, i_k\}$ and $\{j_1,\dots, j_k\}$ be two equinumerous sets of integers such that 
$i_1<\cdots < i_k$ and $j_1 < \cdots < j_k$. 
We will write 
\begin{align}\label{A:partialorder}
\{i_1,\dots, i_k\} \leqslant \{j_1,\dots, j_k\}\iff i_1\leq j_1, i_2 \leq j_2, \dots, i_k \leq j_k.
\end{align}
Let $x=(x_1,\dots, x_n)$ be an element from $\mathcal{R}_n$. 
For $i\in \{1,\dots,n\}$, we define 
\[
\tilde{x}(i) := \{ x_1,\dots, x_i\}.
\]
In this notation, the main result of~\cite{CanRenner12} is as follows. 

\begin{Theorem}\label{T:CR}
Let $x=(x_1, \dots , x_n)$ and $y=(y_1, \dots , y_n)$ be two elements from $\mathcal{R}_n$. 
Then $x \leq_{BCR} y$ if and only if for every $i\in \{1,\dots, n-1\}$ we have $\tilde{x}(i) \leqslant \tilde{y}(i)$. 
\end{Theorem}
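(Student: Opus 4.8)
The plan is to reduce the Bruhat-Chevalley-Renner order on $\mathcal{R}_n$ to the ordinary Bruhat-Chevalley order on the symmetric group $S_n$, for which the stated entrywise comparison of sorted initial segments is precisely Ehresmann's tableau criterion, and then to read off the condition. First I would record the combinatorial data attached to the cross-section lattice $\Lambda=\{e_0,\dots,e_n\}$ of $\mathcal{R}_n$. Since $e_p=E_{11}+\cdots+E_{pp}$ commutes with $r_i$ exactly when $i\neq p$, one has $W(e_p)=S_{\{1,\dots,p\}}\times S_{\{p+1,\dots,n\}}$ with $W^*(e_p)=S_{\{1,\dots,p\}}$ and $W_*(e_p)=S_{\{p+1,\dots,n\}}$, so that $D(e_p)$ is the set of Grassmannian permutations (increasing on $1,\dots,p$ and on $p+1,\dots,n$) while $D_*(e_p)$ consists of the permutations increasing on $p+1,\dots,n$. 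Writing $x$ and $y$ in their Pennell-Putcha-Renner standard forms $x=a\,e_p\,b^{-1}$ and $y=c\,e_q\,d^{-1}$, the coordinate $b$ records the nonzero columns $D(x)$ in increasing order while $a$ records the rows $R(x)$ listed in the order matched to those columns, so that $(a,b)$ jointly reconstructs the one-line sequence $(x_1,\dots,x_n)$; the goal is to match the criterion of Pennell, Putcha and Renner against the inequalities $\tilde x(i)\leqslant\tilde y(i)$.

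For the necessity direction I would argue geometrically. Left multiplication by an upper triangular matrix replaces each row by a combination of it and the rows below it, and right multiplication by the inverse of an upper triangular matrix replaces each column by a combination of it and the columns to its left; hence the rank of every ``southwest'' submatrix $g_{[t+1,n]\times[1,i]}$ is constant on each orbit $B_n g B_n$. As this rank is lower semicontinuous, it can only drop on passing to the closure, so $x\leq_{BCR} y$ forces $\operatorname{rank}x_{[t+1,n]\times[1,i]}\le\operatorname{rank}y_{[t+1,n]\times[1,i]}$ for all $t,i$. A short bookkeeping computation, expressing these southwest ranks through the number of nonzero columns among the first $i$ and the rows they occupy, shows that this family of inequalities is equivalent, for each fixed $i$, to $\tilde x(i)\leqslant\tilde y(i)$, where the initial segment is read as the size-$i$ multiset $\{x_1,\dots,x_i\}$ in which each empty column contributes the least element $0$. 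This yields the forward implication and, incidentally, makes the comparison well defined by forcing the two segments to be equinumerous.

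The sufficiency direction is where the real work lies. Here I would use the standard forms together with the Pennell-Putcha-Renner criterion: $x\leq_{BCR}y$ holds if and only if $p\le q$ and there is some $w\in W(e_q)W(e_p)$ with $a\le cw$ and $w^{-1}d^{-1}\le b^{-1}$ in $S_n$. On these parabolic quotients the Bruhat order admits an explicit combinatorial description, by Ehresmann's criterion this is the Gale order on $p$-subsets for the domain data $b,d$ and the finer order on ordered range sequences for $a,c$, so the two one-sided inequalities compare the domain sets and the range sequences. The crux, and the step I expect to be the main obstacle, is eliminating the existential quantifier on $w$: a single permutation must witness both inequalities simultaneously, so it is exactly the coupling between domain and range, namely how the occupied rows are distributed among the nonzero columns, that cannot be decided by comparing domains and ranges in isolation. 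I would resolve this by constructing an optimal (greedy, minimal-length) candidate $w$ and verifying through Ehresmann's criterion that a valid $w$ exists precisely when $\tilde x(i)\leqslant\tilde y(i)$ holds for every $i$; the key point is that $\tilde x(i)$ simultaneously encodes, for the first $i$ columns, which are empty and which rows the nonempty ones hit, which is exactly the data that one $w$ must reconcile.

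Finally I would assemble the two implications and recheck the zero bookkeeping governed by the idempotent $e_p$, so that the multiset comparison is applied only between equinumerous segments. As a consistency check one verifies the extreme cases directly: the zero element, whose initial segments are all $(0,\dots,0)$, is the global minimum, while the longest element $w_0$, whose initial segments are the largest possible $(n-i+1,\dots,n)$, is the global maximum, both in agreement with the tableau order. Since the geometric computation used for necessity and the standard-form computation used for sufficiency are logically independent, either can serve as an internal cross-check on the other.
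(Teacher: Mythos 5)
First, a remark on the comparison baseline: this paper does not prove Theorem~\ref{T:CR} at all; it is quoted from~\cite{CanRenner12} without proof, so your attempt can only be judged on its own terms. Half of it holds up well. The necessity direction is correct and essentially complete: the southwest ranks $\operatorname{rank}\, g_{[t+1,n]\times[1,i]}$ are constant on $B_n\times B_n$-orbits (left multiplication by upper triangular matrices mixes each row with the rows below it, right multiplication mixes each column with the columns to its left), rank is lower semicontinuous so these quantities can only drop on orbit closures, and for fixed $i$ the resulting family of inequalities over $t$ is equivalent to the sorted entrywise comparison of the size-$i$ multisets $\{x_1,\dots,x_i\}$ and $\{y_1,\dots,y_i\}$. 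Your reading of $\tilde{x}(i)$ as a multiset padded with $0$'s for empty columns is also the right way to make the statement well posed when zeros occur; this geometric route is genuinely different from (and cleaner than) a purely combinatorial treatment of this implication.

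The gap is in the sufficiency direction, and you name it yourself: everything hinges on eliminating the existential quantifier over $w\in W(f)W(e)$ in the Pennell--Putcha--Renner criterion, i.e.\ producing a single permutation that simultaneously witnesses $a\leq cw$ and $w^{-1}d^{-1}\leq b^{-1}$. Your plan for this step is only a promise --- ``construct an optimal (greedy, minimal-length) candidate $w$ and verify through Ehresmann's criterion'' --- with no definition of the candidate, no argument that it lies in $W(f)W(e)$, and no derivation of the two Bruhat inequalities from the hypothesis $\tilde{x}(i)\leqslant\tilde{y}(i)$ for all $i$. That coupling between domain and range data is precisely the content of the theorem (and is where the proof in~\cite{CanRenner12} spends its effort), so as written the ``if'' direction is unproven. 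Two honest ways to close the gap: (a) actually carry out the greedy construction and the Deodhar/Ehresmann verification, which is a nontrivial case analysis, not a routine check; or (b) stay geometric and prove, or cite from the theory of matrix Schubert varieties, that $\overline{B_n y B_n}$ in $M_n$ is exactly the locus of matrices whose southwest ranks are bounded by those of $y$ --- then your rank inequalities would immediately place $x$ in $\overline{B_n y B_n}$. Either completion is a substantial piece of mathematics that the proposal currently replaces with a declaration of intent.
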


\begin{Example}
Let $x= (3,1,5,2,4)$ and $y= (5,2,4,3,1)$ be two elements from $\mathcal{R}_5$. 
Since 
\begin{align*}
\tilde{x} (1) = \{ 3\} &\leqslant \{ 5\} = \tilde{y} (1), \\ 
\tilde{x} (2) = \{ 1,3\} &\leqslant \{ 2,5\} = \tilde{y} (2), \\ 
\tilde{x} (3) = \{ 1,3,5\} &\leqslant \{ 2,4,5\} = \tilde{y} (3), \\ 
\tilde{x} (4) = \{ 1,2,3,5\} &\leqslant \{ 2,3,4,5\} = \tilde{y} (4),
\end{align*}
we see that $x\leq_{BCR} y$. 
\end{Example}

Theorem~\ref{T:CR} is quite useful for explicit computations.

\subsection{The Bruhat-Chevalley-Renner order on $\mc{R}_G$.}

As before, let $n$ be an even number, $n=2l$, $l\in \Z_+$. 
Recall that $W_G$ denotes the Weyl group of $Sp_n$. 
Then $W_G$ is the centralizer in $\mathcal{S}_n$ of the involution $\theta = (1,n) (2, n-1)\cdots (l,l+1)$.
As a Coxeter group, $W_G$ has type $\text{BC}_l$.
In~\cite[Corollary 8.1.9]{BjornerBrenti}, it is shown that for two elements $u$ and $v$ from $W_G$,
\begin{align}\label{A:inCn}
u \leq v\ \text{ in }\ W_G \iff u \leq v\ \text{ in }\ W_H=\mathcal{S}_n.
\end{align}
We will extend (\ref{A:inCn}) to the Renner monoid of $MSp_n$.

\begin{Theorem}\label{T:inrsn}
Let $x$ any $y$ be two elements from $\rs$. 
Then 
\begin{align*}
x \leq y\ \text{ in }\ \rs \iff x \leq y\ \text{ in }\ \mathcal{R}_n.
\end{align*}
\end{Theorem}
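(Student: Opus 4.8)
The plan is to reduce the equivalence to a statement we already control, namely the Bruhat--Chevalley--Renner order on $\mathcal{R}_n$ described in Theorem~\ref{T:CR}, and to exploit the fact that $\rs$ is a \emph{submonoid} of $\mathcal{R}_n$ by equation~(\ref{A:LiRenner}). The forward implication is automatic: if $x\leq y$ in $\rs$, then by definition $B x B \subseteq \overline{B y B}$ inside $MSp_n$, and since $MSp_n \subseteq M_n$ and $B = B_G = B_H^\theta \subseteq B_n$, the Zariski closure relation persists after embedding into $M_n$, giving $x\leq y$ in $\mathcal{R}_n$. So the entire content is in the reverse implication, and the hard part will be showing that the finer (type $\mathrm{C}$) order is no coarser than the restriction of the ambient type $A$ order.

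\medskip
\noindent
First I would set up the PPR standard-form machinery from~(\ref{A:std}) and~(\ref{A:BCR}) for both monoids simultaneously. The key structural observation is that the involution $\theta$ acts compatibly on all the relevant data: it fixes $B_G$, $T_G$, and permutes the cross-section lattice $\Lambda_G$ of~(\ref{A:crossG}) trivially (the idempotents $e_k$ are $\theta$-stable). Consequently, for a $\theta$-fixed idempotent $e\in\Lambda_G$, the parabolic subgroups $W(e), W^*(e), W_*(e)$ of $W_G$ are exactly the $\theta$-fixed points of the corresponding parabolics of $\mathcal{S}_n$, and the descent sets $D(e), D_*(e)$ of $W_G$ are the $\theta$-fixed restrictions of those of $\mathcal{S}_n$. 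Thus when $x = a e b^{-1}$ is the standard form of a symplectic rook inside $\rs$, it is simultaneously the standard form of $x$ viewed inside $\mathcal{R}_n$, with $a,b\in W_G\subseteq \mathcal{S}_n$.

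\medskip
\noindent
The decisive step is then to analyze the existential criterion~(\ref{A:BCR}): given $x = aeb^{-1}$ and $y=cfd^{-1}$ in $\rs$ with $x\leq y$ in $\mathcal{R}_n$, there exists $w\in W(f)W(e)$ (a priori in $\mathcal{S}_n$) realizing $a\leq cw$ and $w^{-1}d^{-1}\leq b^{-1}$; I must produce such a $w$ lying in $W_G$. Here I would lean on the group-level result~(\ref{A:inCn}) from Bj\"orner--Brenti, which says precisely that the Bruhat orders of $W_G$ and $\mathcal{S}_n$ agree on $W_G$, together with the fact that $\theta$ is an order-reversing-compatible (in fact order-preserving on $W_G$) automorphism. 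The anticipated main obstacle is the passage from ``some $w\in\mathcal{S}_n$ works'' to ``some $\theta$-fixed $w$ works'': I expect to apply $\theta$ to the witnessing inequalities $a\leq cw$ and $w^{-1}d^{-1}\leq b^{-1}$, use that $a,b,c,d,e,f$ are all $\theta$-fixed and that $\theta$ respects Bruhat order on $\mathcal{S}_n$, to conclude that $\theta(w)$ is also a witness, and then invoke a minimality/lattice property of the coset $W(f)W(e)$ (or a subword/exchange argument) to replace $w$ by a canonical $\theta$-stable representative. An alternative, and perhaps cleaner, route is to bypass standard forms entirely and argue directly with Theorem~\ref{T:CR}: show that the intrinsic type-$\mathrm{C}$ order on $\rs$ admits the \emph{same} $\tilde{x}(i)\leqslant\tilde{y}(i)$ characterization, which would follow once one checks that the $\theta$-symmetry of symplectic rooks makes the conditions at indices $i$ and $n-i$ redundant. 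I would pursue the standard-form approach first, falling back on the direct combinatorial verification via Theorem~\ref{T:CR} if the coset-representative argument proves delicate.
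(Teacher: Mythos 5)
Your forward implication is fine once you add the routine saturation step: from $BxB\subseteq \overline{ByB}$ in $MSp_n$ you get $B_n x B_n = B_n(BxB)B_n \subseteq B_n\,\overline{ByB}\,B_n \subseteq \overline{B_n y B_n}$, hence $x\leq y$ in $\mathcal{R}_n$. The reverse implication, however, is built on a structural claim that is false. You assert that for $e\in\Lambda_G$ the sets $D_*(e)$ and $D(e)$ computed in $W_G$ are the $\theta$-fixed parts of the corresponding sets computed in $\mathcal{S}_n$, so that an $\rs$-standard form is automatically an $\mathcal{R}_n$-standard form. This is true for the parabolic subgroups $W(e)$ and $W_*(e)$ (they are the $\theta$-fixed points of the type A ones), but not for the sets of minimal coset representatives, because the type A stabilizer of $e_d$ is far bigger than the type C one (all of $\langle r_{d+1},\dots,r_{n-1}\rangle$ versus $\langle s_{d+1},\dots,s_l\rangle$, cf. Proposition~\ref{P:B}), and because $\ell_{W_G}$ and the restriction of $\ell_{\mathcal{S}_n}$ to $W_G$ are different length functions. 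Concretely, for $n=4$ and $e=e_1$, the symplectic rook $x=E_{21}=(2,0,0,0)$ has $\rs$-standard form $x=s_1e_1$ with $s_1=r_1r_3$ (here $(W_G)_*(e_1)=\langle s_2\rangle$ and $s_1$ is the shorter element of its coset $\{s_1,s_1s_2\}$), whereas its $\mathcal{R}_4$-standard form is $x=r_1e_1$: since $r_3$ stabilizes $e_1$ in $\mathcal{S}_4$, the element $s_1=r_1r_3$ is not a minimal coset representative there. So the two standard forms genuinely differ, and your setup for running criterion~(\ref{A:BCR}) in both monoids simultaneously collapses.

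Second, the step you yourself identify as decisive—upgrading a witness $w$ from the type A product of parabolics to one in $W_G(f)W_G(e)$—is not proved but only hoped for: $\theta$-fixedness of $a,b,c,d,e,f$ does show that $\theta(w)$ is again a witness, but $W(f)W(e)$ is merely a product of parabolic subgroups (not a group), and no ``minimality/lattice property'' you can invoke manufactures a $\theta$-fixed witness out of the pair $\{w,\theta(w)\}$. Your fallback—establishing the $\tilde{x}(i)\leqslant\tilde{y}(i)$ characterization of Theorem~\ref{T:CR} intrinsically for $\rs$—is exactly the Corollary that the paper deduces from this theorem, and you do not carry it out. Note that the paper's own proof avoids your false lemma by never comparing standard forms across the two monoids: it writes $x,y$ in $\rs$-standard form, applies the Pennell-Putcha-Renner criterion inside $\rs$ (so the witness ranges over $W_G(f)W_G(e)$ from the outset), and then uses~(\ref{A:inCn}) only to transfer the individual Bruhat inequalities between $W_G$ and $\mathcal{S}_n$.
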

\begin{proof}
We write $x$ and $y$ in their standard form, $x= a eb^{-1}$ and $y= cf d^{-1}$, where $a\in D_*(e),b\in D(e), 
c\in D_*(f),d\in D(f)$.
Of course, $a,b,c$, and $d$ are elements of $W_G$.
By (\ref{A:BCR}) we know that 
\begin{align*}
x \leq y\ \text{ in }\ \rs \iff e\leq f,\ a \leq cw,\ w^{-1} d^{-1} \leq b^{-1},
\end{align*}
for some $w\in W_G(f)W_G(e)$. 
The idempotents of $\rs$ are in $\mathcal{R}_n$, hence, the Bruhat-Chevalley-Renner order on them is the one that is induced from $\mathcal{R}_n$.  
It follows from (\ref{A:inCn}) that the relations $a \leq cw$ and $w^{-1} d^{-1} \leq b^{-1}$ hold in $W_G$ if and only if they hold in 
$\mathcal{S}_n \subseteq \mathcal{R}_n$. 
Therefore, the relation $x\leq y$ holds in $\rs$ if and only if it holds in $\mathcal{R}_n$. 
\end{proof}

\begin{Corollary}
Let $x=(x_1, \dots , x_n)$ and $y=(y_1, \dots , y_n)$ be two elements from $\rs$. 
Then $x \leq_{BCR} y$ if and only if for every $i\in \{1,\dots, n-1\}$ we have $\tilde{x}(i) \leqslant \tilde{y}(i)$. 
\end{Corollary}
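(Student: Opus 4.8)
The plan is simply to chain together the two results already established in this section. First I would note that, by equation~(\ref{A:LiRenner}), every element of $\rs$ is in particular an element of the rook monoid $\mathcal{R}_n$. Consequently the one-line notation $x=(x_1,\dots,x_n)$ underlying Theorem~\ref{T:CR} is well defined for $x\in\rs$, and the associated sets $\tilde{x}(i)=\{x_1,\dots,x_i\}$ carry their usual meaning. This is the one point that deserves a moment's attention, and it is immediate from the inclusion $\rs\subseteq\mathcal{R}_n$.

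Next I would invoke Theorem~\ref{T:inrsn}, which asserts that for $x,y\in\rs$ the relation $x\leq_{BCR} y$ holds in $\rs$ if and only if it holds in $\mathcal{R}_n$. This step transfers the entire question from the symplectic Renner monoid to the ambient rook monoid, where the combinatorial criterion is available. I would then apply Theorem~\ref{T:CR} to the pair $x,y$ regarded as elements of $\mathcal{R}_n$: it gives $x\leq_{BCR} y$ in $\mathcal{R}_n$ precisely when $\tilde{x}(i)\leqslant\tilde{y}(i)$ for every $i\in\{1,\dots,n-1\}$. Composing the two equivalences yields the stated characterization.

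I do not anticipate any genuine obstacle, as the corollary is a direct translation of Theorem~\ref{T:inrsn} through the criterion of Theorem~\ref{T:CR}. The only care required is to ensure that both theorems are applied to the same pair of elements under the same identification, namely the embedding $\rs\subseteq\mathcal{R}_n$, so that the one-line notation and the order $\leqslant$ on the sets $\tilde{x}(i)$ are literally those appearing in Theorem~\ref{T:CR}.
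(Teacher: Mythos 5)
Your proposal is correct and matches the paper's own proof exactly: it transfers the order relation from $\rs$ to $\mathcal{R}_n$ via Theorem~\ref{T:inrsn} and then applies the combinatorial criterion of Theorem~\ref{T:CR}. The extra remark about the one-line notation being inherited from the inclusion $\rs\subseteq\mathcal{R}_n$ is a harmless (and reasonable) clarification that the paper leaves implicit.
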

\begin{proof}
It follows from Theorem~\ref{T:inrsn} that $x\leq_{BCR} y$ in $\rs$ if and only if $x\leq_{BCR} y$ in $\mathcal{R}_n$. 
The rest of the proof follows from Theorem~\ref{T:CR}.
\end{proof}

\section{The Borel Submonoid of $MSp_n$}\label{S:MSp_n}

We will follow our convention that if the even integer $n=2l$ is fixed, then $G$ stands for $Sp_n$. 
We will denote by $B_G$, as before, the Borel subgroup in $Sp_n$ that is defined by 
\[
B_G:=B_n\cap Sp_n. 
\]
The Borel subgroup of the unit group $GSp_n$ of $MSp_n$ is given by $B:= \C^* B_G$, 
and the corresponding Borel submonoid of $MSp_n$ is the Zariski closure of $B$ in $MSp_n$. 
Evidently, $B$ is a connected, hence irreducible, algebraic group. 
Then $\overline{B}$ is an irreducible $B\times B$-variety. 
The orbits of $B\times B$ are parametrized by $x\in \rs$ such that $x\leq 1$. 
Indeed, we have 
\[
\mc{B}_G = \mc{B}_n \cap \mc{R}_G := \{ x \in \rs :\ x\leq 1\}.
\]
We depict the Bruhat-Chevalley-Renner order on $\mc{B}_{Sp_4}$ in Figure~\ref{F:BSp4}.

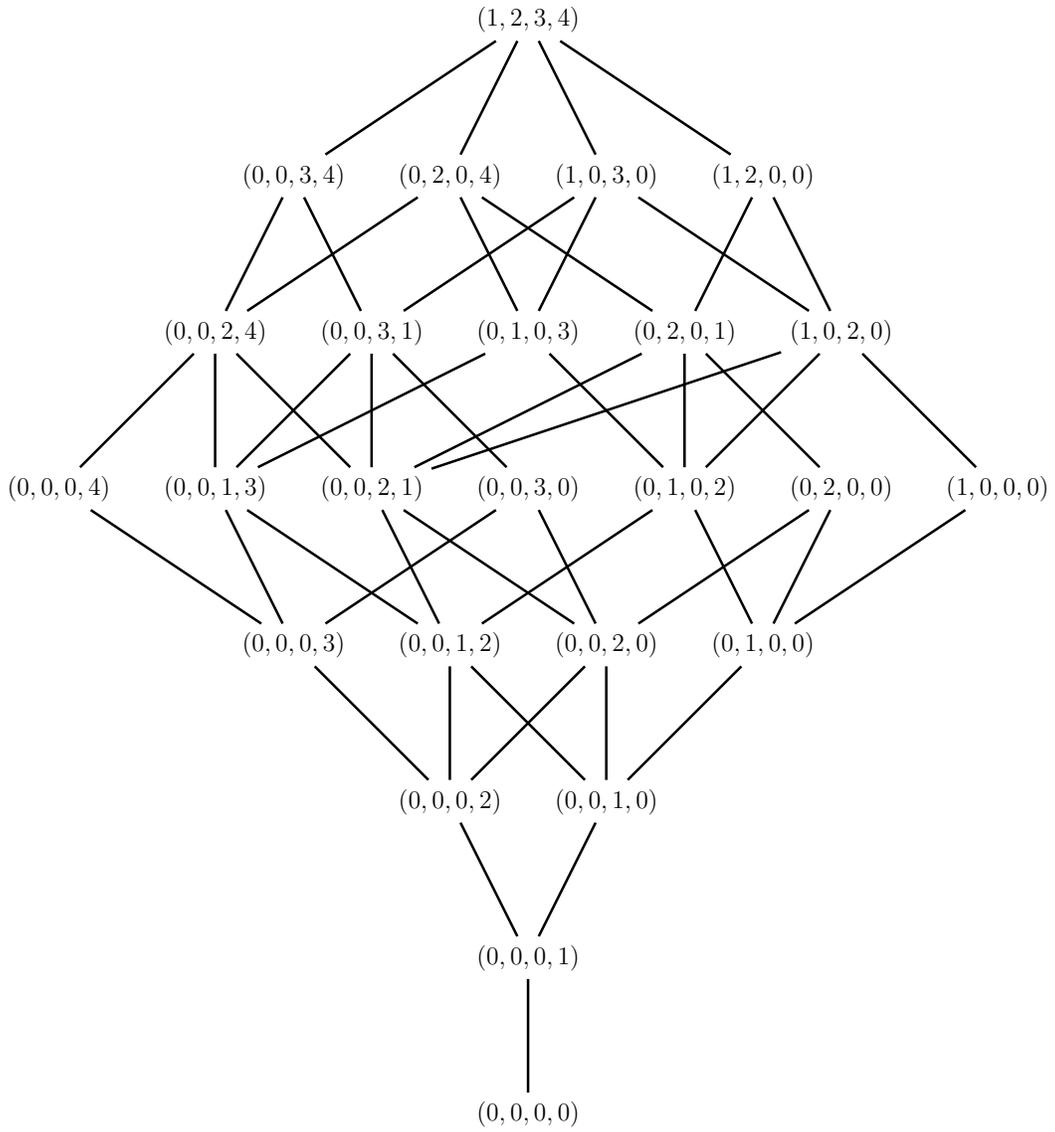
\begin{figure}[htp]
\begin{center}

\scalebox{.8}{
\begin{tikzpicture}[scale=.65]

\node at (0,28) (h1) {$(1,2,3,4)$};

\node at (-6,24) (g1) {$(0,0,3,4)$};
\node at (-2,24) (g2) {$(0,2,0,4)$};
\node at (2,24) (g3) {$(1,0,3,0)$};
\node at (6,24) (g4) {$(1,2,0,0)$};

\node at (-8,20) (f1) {$(0,0,2,4)$};
\node at (-4,20) (f2) {$(0,0,3,1)$};
\node at (0,20) (f3) {$(0,1,0,3)$};
\node at (4,20) (f4) {$(0,2,0,1)$};
\node at (8,20) (f5) {$(1,0,2,0)$};

\node at (-12,16) (e1) {$(0,0,0,4)$};
\node at (-8,16) (e2) {$(0,0,1,3)$};
\node at (-4,16) (e3) {$(0,0,2,1)$};
\node at (0,16) (e4) {$(0,0,3,0)$};
\node at (4,16) (e5) {$(0,1,0,2)$};
\node at (8,16) (e6) {$(0,2,0,0)$};
\node at (12,16) (e7) {$(1,0,0,0)$};

\node at (-6,12) (d1) {$(0,0,0,3)$};
\node at (-2,12) (d2) {$(0,0,1,2)$};
\node at (2,12) (d3) {$(0,0,2,0)$};
\node at (6,12) (d4) {$(0,1,0,0)$};

\node at (-2,8) (c1) {$(0,0,0,2)$};
\node at (2,8) (c2) {$(0,0,1,0)$};

\node at (0,4) (b1) {$(0,0,0,1)$};

\node at (0,0) (a) {$(0,0,0,0)$};

\draw[-, very thick] (a) to (b1);
\draw[-, very thick] (b1) to (c1);
\draw[-, very thick] (b1) to (c2);
\draw[-, very thick] (c1) to (d1);
\draw[-, very thick] (c1) to (d2);
\draw[-, very thick] (c1) to (d3);

\draw[-, very thick] (c2) to (d2);
\draw[-, very thick] (c2) to (d3);
\draw[-, very thick] (c2) to (d4);

\draw[-, very thick] (d1) to (e1);
\draw[-, very thick] (d1) to (e2);
\draw[-, very thick] (d1) to (e4);
\draw[-, very thick] (d2) to (e2);
\draw[-, very thick] (d2) to (e3);
\draw[-, very thick] (d2) to (e5);

\draw[-, very thick] (d3) to (e3);
\draw[-, very thick] (d3) to (e4);
\draw[-, very thick] (d3) to (e6);

\draw[-, very thick] (d4) to (e5);
\draw[-, very thick] (d4) to (e6);
\draw[-, very thick] (d4) to (e7);

\draw[-, very thick] (e1) to (f1);
\draw[-, very thick] (e2) to (f1);
\draw[-, very thick] (e2) to (f2);
\draw[-, very thick] (e2) to (f3);
\draw[-, very thick] (e3) to (f1);
\draw[-, very thick] (e3) to (f2);
\draw[-, very thick] (e3) to (f4);
\draw[-, very thick] (e3) to (f5);
\draw[-, very thick] (e4) to (f2);
\draw[-, very thick] (e5) to (f3);
\draw[-, very thick] (e5) to (f4);
\draw[-, very thick] (e5) to (f5);
\draw[-, very thick] (e6) to (f4);
\draw[-, very thick] (e7) to (f5);

\draw[-, very thick] (f1) to (g1);
\draw[-, very thick] (f1) to (g2);
\draw[-, very thick] (f2) to (g1);
\draw[-, very thick] (f2) to (g3);
\draw[-, very thick] (f3) to (g2);
\draw[-, very thick] (f3) to (g3);
\draw[-, very thick] (f4) to (g2);
\draw[-, very thick] (f4) to (g4);
\draw[-, very thick] (f5) to (g3);
\draw[-, very thick] (f5) to (g4);

\draw[-, very thick] (g1) to (h1);
\draw[-, very thick] (g2) to (h1);
\draw[-, very thick] (g3) to (h1);
\draw[-, very thick] (g4) to (h1);

\end{tikzpicture}
}
\end{center}
\caption{Bruhat-Chevalley-Renner order on $\mc{B}_{Sp_4}$.}
\label{F:BSp4}
\end{figure}

We are going to reformulate the description of $\mc{B}_G$ in two different ways. 
\begin{enumerate}
\item  
It is observed in~\cite[Lemma 2.3]{PPR97} that an element $r$ from $\rs$ satisfies $r\leq 1$ if and only if 
$a\leq b$, where $a e b^{-1}$ is the standard form of $r$. 
Thus, we have 
\begin{align}\label{A:1}
\bs = \{ a e b^{-1} :\ aeb^{-1}\text{ is in standard form, $e\in \Lambda_G$, $a\in D_*(e), b\in D(e)$, and $a\leq b$}\}.
\end{align}

\item Let $D(x)$ (resp. $R(x)$) denote the domain (resp. range) 
of an element $x\in \mathcal{R}_n$.
The data of $D(x)$ and $R(x)$ are not enough to recover $x$.
One needs to know the (bijective) assignment between them to uniquely determine $x$. 
Let $D(x)$ and $R(x)$ be as follows: 
\[
D(x) = \{i_1,\dots, i_k\}\ \text{ and } R(x) = \{j_1,\dots, j_k\},
\]
where 
\[
x(i_t) = j_t \ \text{ for } t\in \{1,\dots, k\}.
\]
We will assume that the entries of $D(x)$ are listed in the increasing order 
as in $1\leq i_1 < \cdots < i_k \leq n$,
however, we may not have the same ordering on the corresponding elements of $R(x)$.

Notice that in order for $x$ be $\leq 1$ in the Bruhat-Chevalley-Renner order its matrix 
representation has to have all of its nonzero entries on or above the main diagonal. 
Since $D(x)$ gives the column indices of the nonzero entries in $x$, and since 
$R(x)$ gives the row indices of the nonzero entries in $x$, we see that 
\[
x\leq 1 \iff i_t \geq j_t\ \text{ for every $t\in \{1,\dots, k\}$.}
\]
By (\ref{A:LiRenner}), for the elements $x$ in $\rs\setminus \{1\}$, 
both of the subsets $D(x),R(x) \subseteq \{1,\dots, n\}$ are admissible. 

\end{enumerate}

\begin{Question}\label{Q:leq1}
What is the cardinality of $\bs$? 
By the second item, our problem is equivalent to the question of 
finding, for every $k\in \{1,\dots, l\}$, the number of pairs of admissible subsets 
\[
I = \{i_1,\dots, i_k\}\ \text{ and } J = \{j_1,\dots, j_k\} 
\]
such that 
\begin{enumerate}
\item $1\leq i_1 < \cdots < i_k \leq n$ and $J\subset \{1,\dots, n\}$; there are no order constraints on the elements of $J$.
\item $i_t \geq j_t \geq 1$ for every $t\in \{1,\dots, k\}$.
\end{enumerate}
\end{Question}

It turns out that the number of admissible subsets of $\{1,\dots,n\}$ has a pleasant formula. 
We begin with a simple lemma.

\begin{Lemma}\label{L:a_nk's}
Let $n$ and $k$ be two positive integers such that $1\leq k \leq n$. 
Assume that $n$ is an even number, $n=2l$. 
Let $A_{n,k}$ denote the set of admissible subsets of $\{1,\dots, n\}$ with $k$ 
elements. 
If $a_{n,k}$ denotes the cardinality of $A_{n,k}$, then 
\[
a_{n,k}= \sum_{r=0}^k {l \choose r }{l-r \choose k-r} = {l \choose k} 2^k.
\]
\end{Lemma}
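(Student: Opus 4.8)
The plan is to exploit the orbit structure of the involution $\theta$. First I would observe that since $n=2l$ is even, the involution $\theta(i)=n-i+1$ has no fixed points: a fixed point would force $2i=n+1$, which is impossible for even $n$. Hence $\theta$ partitions $\{1,\dots,n\}$ into exactly $l$ two-element orbits
\[
P_i := \{\,i,\ n-i+1\,\}, \qquad i\in\{1,\dots,l\}.
\]
The defining condition $\theta(S)\cap S=\emptyset$ says precisely that $S$ contains no element $s$ together with its image $\theta(s)$; equivalently, $S$ meets each orbit $P_i$ in at most one point. Thus an admissible $k$-subset is the same datum as a choice of $k$ of the $l$ orbits together with a single chosen representative from each.

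Next I would count the admissible $k$-subsets by refining according to how many of their elements lie in the lower half $\{1,\dots,l\}$. Writing each orbit as $P_i=\{i,\ n-i+1\}$ with $i\le l<n-i+1$, call $i$ its small representative and $n-i+1$ its large representative. Suppose $S\in A_{n,k}$ has exactly $r$ small elements. These determine $r$ of the orbits, which can be chosen in $\binom{l}{r}$ ways; the remaining $k-r$ elements of $S$ are then large representatives of $k-r$ of the remaining $l-r$ orbits, chosen in $\binom{l-r}{k-r}$ ways. Summing over $r\in\{0,1,\dots,k\}$ yields
\[
a_{n,k}=\sum_{r=0}^{k}\binom{l}{r}\binom{l-r}{k-r}.
\]

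Finally I would evaluate the sum in closed form. Using the subset-of-a-subset identity $\binom{l}{r}\binom{l-r}{k-r}=\binom{l}{k}\binom{k}{r}$ together with the binomial theorem $\sum_{r=0}^{k}\binom{k}{r}=2^k$, the sum collapses to $\binom{l}{k}2^k$. Alternatively, one reaches the closed form directly by counting the other way: choose the $k$ occupied orbits in $\binom{l}{k}$ ways, then select one of the two representatives independently in each occupied orbit, contributing the factor $2^k$.

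I do not expect a serious obstacle here, as the argument is elementary enumeration. The only point requiring genuine care is the reduction step: one must verify that admissibility is exactly the condition "$S$ contains at most one element of each $\theta$-orbit," and that $\theta$ is truly fixed-point-free, which is where the hypothesis that $n$ is even is used. Everything after that is the two complementary counts and a standard binomial identity.
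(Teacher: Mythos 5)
Your proof is correct, and for the summation formula it is essentially the paper's own argument: the paper also stratifies an admissible $k$-set by the number $r$ of its elements lying in $\{1,\dots,l\}$, chooses those in $\binom{l}{r}$ ways, and notes that the remaining $k-r$ elements must lie in $\{l+1,\dots,2l\}$ while avoiding the $\theta$-images of the chosen ones, giving $\binom{l-r}{k-r}$; your orbit language ($S$ meets each pair $P_i=\{i,n-i+1\}$ at most once) is a cleaner phrasing of exactly that count. The one place you genuinely diverge is the closed form. The paper gets $\binom{l}{k}2^k$ by expanding the summands into factorials and reassembling them as $\binom{l}{k}\binom{k}{r}$, i.e., it carries out by hand the same subset-of-a-subset identity you invoke; your alternative direct count---choose the $k$ occupied orbits in $\binom{l}{k}$ ways, then independently pick one of the two representatives in each---proves the closed form bijectively, with no algebra, and in fact makes the intermediate sum unnecessary. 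That route is arguably preferable, since it makes the factor $2^k$ conceptually transparent. You are also right that fixed-point-freeness of $\theta$ (which needs $n$ even) is the only delicate point; the paper uses it only implicitly, through the fact that the $\theta$-images of the chosen lower-half elements are $r$ distinct elements of the upper half.
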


\begin{proof}
Clearly, by the pigeon-hole principle, if $k > l$, then $A_{n,k} = \emptyset$. 
Also, in this case, ${l-r \choose k-r}$ is 0 for every $r\in \{0,\dots, k\}$, hence, $a_{n,k} = 0$. 
Therefore, we will assume that $k\leq l$.

Let $A= \{i_1,\dots, i_k\}$ ($i_1 < \cdots < i_k$) be an element from $A_{n,k}$. 
The entries of $A$ satisfy the inequalities 
\[
1\leq i_1<\cdots < i_r \leq l < i_{r+1} <\cdots <i_k \leq n.
\]
We will determine the number of such $A$. 
Clearly, the first $r$ entries, $i_1,\dots, i_r$, can be chosen in ${l\choose r}$ ways. 
Then the remaining entries, $i_{r+1},\dots, i_k$, cannot be contained in the set $\{ \theta(i_s) = n-i_s +1: s\in \{1,\dots, r\}\}$. 
In other words, $\{ i_{r+1},\dots, i_k \} \subseteq \{ l+1,\dots, 2l\}\setminus \{ \theta(i_s) = n-i_s +1: s\in \{1,\dots, r\}\}$. 
Then, the number of possibilities for $\{ i_{r+1},\dots, i_k\}$ is given by 
${l-r \choose k-r}$. 
Therefore, in total, we have $\sum_{r=0}^k {l \choose r}{l-r \choose k-r}$ possibilities for $A$. This finishes the proof of the first equality. 
To prove the second equality, we manipulate the summation as follows:
\begin{align}\label{A:beforeafter}
 \sum_{r=0}^k {l \choose r }{l-r \choose k-r} = 
  \sum_{r=0}^k \frac{l!}{(l-r)! r!} \frac{(l-r)!}{(l-k)! (k-r)!}=  \sum_{r=0}^k \frac{l!}{r! (l-k)! (k-r)!}.
\end{align}
Let us multiply and divide each summand in the last sum in (\ref{A:beforeafter}) by $k!$. Then by reorganizing the terms we get  
\begin{align*}
\sum_{r=0}^k \frac{l!}{(l-k)! k!} \frac{k!}{r! (k-r)!}
=\sum_{r=0}^k {l \choose k} {k\choose r} 
={l \choose k}\sum_{r=0}^k {k\choose r} 
= {l \choose k} 2^k.
\end{align*}
This finishes the proof.
\end{proof}

Since the empty set is admissible, we set $a_{n,0} = 1$. 
\begin{Corollary}\label{C:nofadmissible}
The total number of admissible subsets of $\{1,\dots, n\}$, that is, $|\cup_{k=0}^l A_{n,k}|$, is 
equal to $3^l$. 
\end{Corollary}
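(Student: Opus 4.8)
The plan is to sum the closed-form count from Lemma~\ref{L:a_nk's} over all admissible sizes and recognize the result as a binomial expansion. Since every admissible subset has at most $l$ elements (by the pigeon-hole argument already used in the lemma), the quantity $|\cup_{k=0}^l A_{n,k}|$ is simply $\sum_{k=0}^{l} a_{n,k}$, where the $A_{n,k}$ are disjoint. Substituting $a_{n,0}=1$ and $a_{n,k}={l \choose k}2^k$ for $1\le k\le l$, the sum becomes $\sum_{k=0}^{l}{l \choose k}2^k$, which by the binomial theorem equals $(1+2)^l=3^l$. This is the entire computation; there is no genuine obstacle, only the bookkeeping of disjointness and the convention $a_{n,0}=1$.

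It is worth recording the conceptual reason that $3^l$ appears, as it makes the corollary transparent without invoking the lemma at all. The involution $\theta(i)=n+1-i$ partitions $\{1,\dots,n\}$ into the $l$ two-element orbits $\{i,n+1-i\}$ for $i\in\{1,\dots,l\}$. A subset $S$ is admissible exactly when $\theta(S)\cap S=\emptyset$, i.e.\ when $S$ meets each orbit in at most one point. Hence building an admissible set amounts to making $l$ independent choices, one per orbit, from the three options ``take neither element,'' ``take the smaller element,'' or ``take the larger element.'' This yields $3^l$ admissible subsets immediately, and one may optionally remark that expanding the choices by how many orbits contribute an element reproduces the summation $\sum_{k}{l\choose k}2^k$, tying the two arguments together.

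In the write-up I would lead with the one-line derivation from Lemma~\ref{L:a_nk's}:
\begin{align*}
\left| \bigcup_{k=0}^l A_{n,k} \right| = \sum_{k=0}^{l} a_{n,k} = 1 + \sum_{k=1}^{l} {l \choose k} 2^k = \sum_{k=0}^{l} {l \choose k} 2^k = (1+2)^l = 3^l,
\end{align*}
since the sets $A_{n,k}$ are pairwise disjoint and $A_{n,k}=\emptyset$ for $k>l$. I would then append the orbit-counting remark as the intuitive explanation. The only point requiring a word of justification is the passage from $1+\sum_{k=1}^l$ to $\sum_{k=0}^l$, which is immediate once we note that the $k=0$ term ${l\choose 0}2^0$ equals the contribution $a_{n,0}=1$ of the empty set fixed by convention just before the statement.
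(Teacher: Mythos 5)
Your proof is correct and its lead computation is exactly the paper's argument: sum the formula $a_{n,k} = {l \choose k}2^k$ from Lemma~\ref{L:a_nk's} over $k\in\{0,\dots,l\}$ and recognize $\sum_{k=0}^{l}{l \choose k}2^k = (1+2)^l = 3^l$ by the binomial theorem. The orbit-counting argument you append—each of the $l$ pairs $\{i,\,n+1-i\}$ independently contributes one of three choices—is a clean bijective alternative not in the paper, and it explains the $3^l$ directly without invoking the lemma; either version suffices.
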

\begin{proof}
We will determine the number 
$\sum_{k=0}^l a_{n,k} = \sum_{k=0}^l  {l \choose k} 2^k$.
But, by the binomial theorem, $f(2)=\sum_{k=0}^l  {l \choose k} 2^k$, 
where $f(x) = (1+x)^l$.
\end{proof}

\begin{Theorem}\label{T:maxelements}
The $k$-th symplectic Stirling poset $\mc{B}_G(k)$ is a graded bounded poset with a unique minimum element.
There are $ {l \choose k} 2^k$ maximal elements in $\mc{B}_G(k)$. 
\end{Theorem}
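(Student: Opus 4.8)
The plan is to read off both the extreme elements and the grading directly from the combinatorial model for $\mc{B}_G(k)$ recorded above: an element $x$ is determined by an admissible domain $D(x)=\{i_1<\cdots<i_k\}$, an admissible range $R(x)$, and the bijection $x(i_t)=j_t$ subject to the upper-triangularity constraint $j_t\le i_t$ coming from $x\le 1$; see~(\ref{A:LiRenner}) and the reformulation recorded in Question~\ref{Q:leq1}. All comparisons below I would carry out with the criterion obtained by combining Theorem~\ref{T:inrsn} with Theorem~\ref{T:CR}, namely that $x\le y$ holds if and only if $\tilde{x}(i)\leqslant\tilde{y}(i)$ for every $i$. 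For an admissible subset $S\subseteq\{1,\dots,n\}$ I write $e_S$ for the diagonal idempotent supported on $S$; note that $e_S\in\mc{B}_G(|S|)$, since $D(e_S)=R(e_S)=S$ and $e_S\le 1$.

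To locate the minimum, I would consider the rook $x_0$ whose one-line notation is $(0,\dots,0,1,2,\dots,k)$, so that $D(x_0)=\{n-k+1,\dots,n\}$ and $R(x_0)=\{1,\dots,k\}$ with $x_0(n-k+t)=t$. Because $k\le l$, the set $\{1,\dots,k\}$ and its image $\theta(\{1,\dots,k\})=\{n-k+1,\dots,n\}$ lie on opposite sides of the midpoint $l$, so both $D(x_0)$ and $R(x_0)$ are admissible and $x_0\in\mc{B}_G(k)$. By~\cite{CanCherniavsky}, $x_0$ is the unique minimum of the larger poset $\mc{B}_n(k)\supseteq\mc{B}_G(k)$; since it happens to be symplectic, it is the unique minimum of $\mc{B}_G(k)$ as well.

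For the maximal elements I would show that every $x\in\mc{B}_G(k)$ satisfies $x\le e_{D(x)}$. Both $x$ and $e_{D(x)}$ have their nonzero entries in the columns $D(x)=\{i_1<\cdots<i_k\}$, so for each $i$ the sets $\tilde{x}(i)$ and $\widetilde{e_{D(x)}}(i)$ carry the same number of zeros, and the comparison $\tilde{x}(i)\leqslant\widetilde{e_{D(x)}}(i)$ reduces to comparing the nonzero parts $\{j_t:\ i_t\le i\}$ and $\{i_1,\dots,i_m\}$, where $m=|\{t:\ i_t\le i\}|$. Writing $j_{(1)}\le\cdots\le j_{(m)}$ for the increasing rearrangement of $j_1,\dots,j_m$, this is precisely the assertion $j_{(s)}\le i_s$ for all $s$, which holds because $j_1,\dots,j_s$ are $s$ values all bounded above by $i_s$ (using $j_t\le i_t\le i_s$ for $t\le s$). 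Hence $x\le e_{D(x)}$, with equality only when $x=e_{D(x)}$. Since each $e_S$ (with $S$ admissible and $|S|=k$) is already maximal in $\mc{B}_n(k)$ by~\cite{CanCherniavsky} and lies in the subposet $\mc{B}_G(k)$, it is maximal there too; conversely the inequality just proved shows that every element of $\mc{B}_G(k)$ is dominated by such an $e_S$. Therefore the maximal elements of $\mc{B}_G(k)$ are exactly the diagonal idempotents with admissible $k$-element support, and Lemma~\ref{L:a_nk's} counts them as $a_{n,k}={l\choose k}2^k$.

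It remains to establish gradedness and boundedness. The Bruhat--Chevalley--Renner order is rank-monotone: if $x\le y$, then comparing the idempotents $e\le f$ occurring in the standard forms $x=aeb^{-1}$ and $y=cfd^{-1}$ of~(\ref{A:std}) (Pennell--Putcha--Renner, \cite{PPR97}) gives $\operatorname{rank}(x)=\operatorname{rank}(e)\le\operatorname{rank}(f)=\operatorname{rank}(y)$, since the idempotents in $\Lambda_G$ form a chain ordered by rank. Consequently $\mc{B}_G(k)$ is convex in $\rs$, for if $x\le z\le y$ with $x,y\in\mc{B}_G(k)$ then $z\le 1$ and $\operatorname{rank}(z)=k$, whence $z\in\mc{B}_G(k)$. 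A convex subposet of a graded poset is itself graded for the induced rank function, and $\rs$ is graded under the Bruhat--Chevalley--Renner order by the length function arising from the cell decomposition of $MSp_n$ in~\cite{LiRenner}; this yields the gradedness of $\mc{B}_G(k)$. Finally, the containment $x\le e_{D(x)}$ shows that $\mc{B}_G(k)=\bigcup_S [x_0,e_S]$, the union being over admissible $k$-subsets $S$, so $\mc{B}_G(k)$ is a union of bounded intervals all sharing the least element $x_0$. I expect the gradedness step to be the only delicate point: a general induced subposet of a graded poset need not be graded, so the argument genuinely relies on the rank-monotonicity that forces convexity, together with the gradedness of the ambient Renner monoid $\rs$.
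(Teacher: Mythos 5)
Your proof is correct in substance, and on both of the theorem's nontrivial claims it takes a genuinely different route from the paper's. For the maximal elements, the paper argues by contradiction with a local move: if $a=(a_1,\dots,a_n)$ is maximal in $\mc{B}_G(k)$ but not a diagonal idempotent, it takes the smallest $i$ with $0<a_i\lneq i$ and swaps the entries $a_i$ and $a_{a_i}$ to produce an element of $\mc{B}_G(k)$ strictly above $a$. You instead prove the global domination $x\le e_{D(x)}$ from the criterion of Theorems~\ref{T:inrsn} and~\ref{T:CR}; your reduction to the nonzero parts and the order-statistic bound $j_{(s)}\le i_s$ are correct (under the multiset reading of $\tilde{x}(i)$, which is the intended one in~\cite{CanRenner12}). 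This buys strictly more than the paper extracts: it identifies the maximal elements and simultaneously exhibits $\mc{B}_G(k)=\bigcup_S [x_0,e_S]$ as a union of intervals with a common bottom, the symplectic analogue of the structure the paper states only for $\mc{B}_n(k)$ in its introduction. The minimum-element step and the count (Lemma~\ref{L:a_nk's} together with~\cite{CanCherniavsky}) coincide with the paper's. On gradedness the two arguments are complementary rather than parallel: the paper's reduction (``it suffices that all maximal elements have the same rank'') silently assumes that covering relations of $\mc{B}_G(k)$ are covering relations of an ambient graded poset, and your rank-monotonicity/convexity argument is exactly what justifies that assumption. You are also right to take convexity inside $\rs$ and not inside $\mc{R}_n$: for instance $(0,2,0,3)$ lies strictly between the elements $(0,0,1,2)$ and $(0,2,0,4)$ of $\mc{B}_{Sp_4}(2)$ in $\mc{R}_4$, yet its range $\{2,3\}$ is not admissible, so $\mc{B}_G(k)$ is genuinely not convex in $\mc{R}_n$.

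One loose end remains, which you share with (and inherit from) the paper. Convexity yields a rank function compatible with covers, hence gradedness of each interval $[x_0,e_S]$; but to conclude what the paper evidently means by ``graded'' --- all maximal chains of $\mc{B}_G(k)$ have the same length --- you still need $\ell(e_S)=\dim(Be_SB)$ to be independent of the admissible $k$-subset $S$, since your maximal chains terminate at different $e_S$. You do not verify this, but neither does the paper: it reduces gradedness to exactly this statement and then only identifies and counts the maximal elements. The fact is true (it is the symplectic counterpart of the equal-length property of rank-$k$ diagonal idempotents in $\mc{R}_n$ underlying~\cite{CanCherniavsky}, and can be checked from the explicit cell data for $\rs$ in~\cite{LiRenner}); adding that single verification would make your argument complete, and arguably tighter than the published proof.
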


\begin{proof}

If $k=n$ (resp. $k=0$), then $\mc{B}_G(k) = \{ id \}$ (resp. $\mc{B}_G(k) = \{ \mathbf{0} \}$), hence, in these cases 
there is nothing to prove. 
We proceed with the assumption that $1\leq k \leq l$. 
Since $\mc{B}_G$ is equal to the intersection $\mc{R}_G\cap \mc{B}_n$, we have
\[
\mc{B}_G(k) = \mc{B}_n(k) \cap \mc{R}_G\ \text{ for $k\in\{1,\dots, l\}$}.
\]
Notice that the rook $id(k):=(0,\dots, 0, 1,2,\dots, k)$ is a symplectic rook.
In fact, $id(k)$ is the unique minimum of $\mc{B}_n(k)$. 
It follows from Theorem~\ref{T:inrsn} that $id(k)$ is the unique minimum element in $\mc{B}_G(k)$ as well. 
Next, we will show that $\mc{B}_G(k)$ is a graded poset. 
To this end, it will suffice to show that every maximal element of $\mc{B}_G(k)$ has the same rank. 
In~\cite[Lemma 5.1]{CanCherniavsky}, it is shown that the maximal elements of $\mc{B}_G(k)$ are given by 
the diagonal idempotents of rank $k$ in $\mc{R}_n$. 
Clearly, any diagonal idempotent of rank $k$ whose domain and range are admissible subsets in $\{1,\dots, n\}$ 
are contained in $\mc{B}_G(k)$. 
But for a diagonal matrix, the domain and the range agree, therefore, the number of diagonal idempotents in $\mc{B}_G(k)$ 
is equal to the number of admissible subsets in $\{1,\dots, n\}$. 
This number is equal to ${l\choose k}2^k$ by Lemma~\ref{L:a_nk's}. 
Next, we will show that there are no other maximal elements in $\mc{B}_G(k)$. 
Towards a contradiction, let $a= (a_1,\dots, a_n)$ be a maximal element in $\mc{B}_G(k)$ which is not a diagonal idempotent. 
Since $a$ is an upper triangular rook, we know that $a_i \leq i$ for every $i\in \{1,\dots, n\}$. 
Let $i\in \{1,\dots, n\}$ be the smallest index such that $0<a_i \lneq i$. Let $j$ denote $a_i$. 
Then we know that $a_j=0$.
Let $b$ denote the rook matrix that is obtained by interchanging $a_i$ and $a_j$. 
It is easy to verify that $b$ is an element of $\mc{B}_G(k)$ such that $a < b$. 
This contradicts with our assumption that $a$ is a maximal element. 
Hence, the proof of our theorem is finished.

\end{proof}

\begin{Remark}
For each $d\in \{1,\dots, l\}$, by (\ref{A:partialorder}), we have a very special poset structure 
on $A_{n,d}$. 
It is easily seen from~\cite[Section 6.1.1]{LakshmibaiRaghavan} that $(A_{n,d},\leqslant)$ 
is isomorphic to the Bruhat-Chevalley order on the Grassmann variety $G/P_d$,
where $G=Sp_n$ and $P_d$ is the maximal parabolic subgroup corresponding to the 
set of simple generators $S_G \setminus \{s_d\}$ in $W_G$.
\end{Remark}

\begin{Proposition}\label{P:A}
Let $\Lambda_G$ be the cross section lattice of $G$ as in (\ref{A:crossG}).
If $e$ is an element from $\Lambda_G\setminus \{1\}$, then $W_G(e)$ is a maximal parabolic subgroup in $W_G$. 
Conversely, any maximal parabolic subgroup of $W_G$ is obtained this way. 
\end{Proposition}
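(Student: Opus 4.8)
The plan is to compute the type map on $\Lambda_G$ directly. Recall from the preliminaries that $W_G(e) = W_{\lambda(e)}$ with $\lambda(e) = \{ s \in S_G :\ se = es\}$, and that $W_G(e) = \{ a \in W_G :\ ae = ea \}$. Since $|S_G| = l$, a parabolic subgroup $W_I$ of $W_G$ is maximal exactly when $I = S_G \setminus \{s\}$ for a single $s \in S_G$. Thus it suffices to show that for each idempotent $e_k$ ($1 \le k \le l$) in $\Lambda_G$ there is precisely one simple reflection that fails to commute with $e_k$, and that the omitted reflection runs through all of $S_G$ as $k$ varies.

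The elements of $\Lambda_G \setminus \{1\}$ are the diagonal projections $e_k = E_{11} + \cdots + E_{kk}$ with $0 \le k \le l$, see (\ref{A:crossG}). The computational heart of the argument is the elementary remark that, because $W_G$ is realized inside $\mathcal{S}_n$ by genuine $0/1$ permutation matrices, any $w \in W_G$ satisfies $w e_k w^{-1} = \sum_{i=1}^{k} E_{w(i),w(i)}$; hence $w e_k = e_k w$ if and only if $w$ stabilizes the support $\{1,\dots,k\}$ as a set. I would then feed the Coxeter generators of (\ref{A:s}) into this criterion: for $j < l$ the generator $s_j$ interchanges $j \leftrightarrow j+1$ together with $n-j \leftrightarrow n-j+1$, while $s_l$ interchanges $l \leftrightarrow l+1$. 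Fixing $1 \le k \le l$ and testing each generator against $\{1,\dots,k\}$, the transposition $n-j \leftrightarrow n-j+1$ can never move the set $\{1,\dots,k\}$ since $k \le l < n-j$, so $s_j$ preserves $\{1,\dots,k\}$ exactly when $j \ne k$, and $s_l$ preserves it exactly when $k \ne l$. Therefore $\lambda(e_k) = S_G \setminus \{ s_k \}$, and $W_G(e_k) = W_{S_G \setminus \{s_k\}}$ is the maximal parabolic subgroup omitting $s_k$.

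For the converse, the maximal parabolic subgroups of the Coxeter system $(W_G, S_G)$ of type $\textrm{C}_l$ are precisely the $l$ subgroups $W_{S_G \setminus \{s_d\}}$, $d \in \{1,\dots,l\}$. The computation above realizes each of these as $W_G(e_d)$, so $e_d \mapsto W_G(e_d)$ is a bijection from $\{e_1,\dots,e_l\}$ onto the set of maximal parabolics; its surjectivity is exactly the ``conversely'' clause. I would also flag the one degenerate idempotent: for the zero $e_0$ one has $a e_0 = e_0 a = 0$ for every $a$, so $\lambda(e_0) = S_G$ and $W_G(e_0) = W_G$ is the improper parabolic. Accordingly the forward statement is to be understood for the nonzero, non-identity idempotents $e_1,\dots,e_l$, and I would say so explicitly.

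I do not expect a real obstacle here: once the set-stabilizer criterion is in hand the proposition is a finite bookkeeping check on transpositions. The only step demanding a little care is verifying that the mirror transposition $n-j \leftrightarrow n-j+1$ hidden inside $s_j$ leaves $\{1,\dots,k\}$ alone; this is precisely where the rank bound $k \le l$ recorded after (\ref{A:crossG}) enters, and without it the argument would not single out one omitted generator.
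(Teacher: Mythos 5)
Your proof is correct and follows essentially the same route as the paper's: both arguments reduce to checking which simple generators $s_j$ of $W_G$ commute with the rank-$d$ idempotent $e_d$ (the paper via the type A centralizer computation in $\mathcal{S}_n$, you via the equivalent set-stabilizer criterion $w e_d w^{-1} = \sum_{i\leq d} E_{w(i),w(i)}$), and both conclude $\lambda(e_d) = S_G \setminus \{s_d\}$, from which maximality and the converse follow. Your explicit treatment of the degenerate idempotent $e_0$, for which $W_G(e_0) = W_G$ is not a proper parabolic, is a worthwhile precision that the paper's proof passes over in silence.
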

\begin{proof}
The cross section lattice $\Lambda_G$ is part of the cross section lattice of the monoid $M_n$.
It is easy to verify that, if the matrix rank of $e$ is $d$, then the centralizer of $e$ in $W_H = S_n$ is 
the maximal parabolic subgroup generated by the set $\{r_1,\dots,r_{n-1}\} \setminus \{r_d\}$.  
Let $s_1,\dots, s_l$ denote, as defined in (\ref{A:s}), the simple Coxeter generators for $W_G$.
Now it is easy to check that, for every $j \in \{1,\dots, l\} \setminus \{d\}$, we have 
\[
s_j e = e s_j,\ \text{ hence, } \ W_G(e) = \langle s_j :\ j\in \{1,\dots, l\} \setminus \{ d\} \rangle.
\]
Our second assertion is now easy to verify. This finishes the proof.
\end{proof}

Next, we will compute the stabilizer of an element $e$ from $\Lambda_G\setminus \{1\}$. 

\begin{Proposition}\label{P:B}
Let $e$ be an element from $\Lambda_G \setminus \{1\}$. 
If the matrix rank of $e$ is $d$, where $1\leq d < l$, then $(W_G)_*(e)$ is generated by 
the simple Coxeter generators $s_{d+1},\dots, s_l$.
If $d=l$, then $(W_G)_*(e) = \{1\}$.  
\end{Proposition}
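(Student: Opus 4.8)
The plan is to compute the stabilizer $(W_G)_*(e)$ directly from its description as a set of permutations and then identify the resulting standard parabolic subgroup by reading off which simple generators it contains. I would start from property (3) in the list preceding~(\ref{A:std}), namely
\[
(W_G)_*(e) = \{ a\in W_G :\ ae = ea = e \}.
\]
Writing $e = e_d = E_{11} + \cdots + E_{dd}$ and viewing $a\in W_G$ as a permutation matrix, I would first record the two elementary identities $a e_d = \sum_{j=1}^d E_{a(j),j}$ and $e_d a = \sum_{i:\, a(i)\leq d} E_{a(i),i}$. From the first, $a e_d = e_d$ forces $a(j)=j$ for $1\leq j\leq d$, and once this holds the second identity yields $e_d a = e_d$ automatically. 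Hence
\[
(W_G)_*(e_d) = \{ a\in W_G :\ a(j) = j \text{ for all } 1\leq j\leq d \}.
\]

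Next I would use that, as recalled from~\cite[Chapter 10]{Putcha}, $(W_G)_*(e)$ is a standard parabolic subgroup of $(W_G, S_G)$, say $(W_G)_*(e_d) = W_K$ for some $K\subseteq S_G$. For a standard parabolic subgroup one has $W_K \cap S_G = K$, so $K$ is precisely the set of simple generators $s_j$ that fix each of $1,\dots,d$. It then remains to determine, using the definition~(\ref{A:s}), which $s_j$ lie in $(W_G)_*(e_d)$. For $j < l$ the generator $s_j = r_j r_{n-j}$ moves the indices $j, j+1, n-j, n-j+1$; since $n-j \geq l+1 > d$, such an $s_j$ fixes $\{1,\dots,d\}$ exactly when $j \geq d+1$. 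The generator $s_l = r_l$ moves only $l, l+1$, and since $d < l$ it fixes $\{1,\dots,d\}$. Thus $K = \{ s_{d+1}, \dots, s_l\}$, giving $(W_G)_*(e_d) = \langle s_{d+1}, \dots, s_l\rangle$, as claimed.

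For the case $d = l$ the same computation shows $(W_G)_*(e_l) = \{ a\in W_G :\ a(j)=j \text{ for } 1\leq j\leq l\}$. Because every $a\in W_G$ satisfies the $\theta$-symmetry $a(n+1-j) = n+1-a(j)$, fixing $1,\dots,l$ forces $a$ to fix $l+1,\dots,n$ as well, so $a=1$; equivalently $K=\emptyset$ and $(W_G)_*(e_l) = \{1\}$.

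The calculations are short, so the step demanding the most care is the index bookkeeping in the middle paragraph: verifying from~(\ref{A:s}) that each of $s_{d+1},\dots,s_{l-1}$ (of the form $r_j r_{n-j}$) together with $s_l = r_l$ fixes the entire block $\{1,\dots,d\}$, while $s_1,\dots,s_d$ do not. As a consistency check, combining this with Proposition~\ref{P:A} recovers the factorization $W_G(e_d) = \langle s_1,\dots,s_{d-1}\rangle \times \langle s_{d+1},\dots,s_l\rangle$, reflecting the isomorphism $W(e)\cong W^*(e)\times W_*(e)$.
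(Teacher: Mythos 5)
Your proof is correct and takes essentially the same route as the paper's: an explicit matrix computation to decide which elements stabilize $e_d$, combined with the fact (from Putcha, as cited in the paper) that $(W_G)_*(e)$ is a standard parabolic subgroup and hence determined by the simple generators it contains. The only cosmetic difference is that you first compute the full pointwise stabilizer $\{a\in W_G:\ a(j)=j \text{ for } 1\leq j\leq d\}$ and settle the $d=l$ case via the $\theta$-symmetry, whereas the paper checks directly which simple reflections $r_j$, and hence which $s_j=r_jr_{n-j}$, commute with and stabilize $e$.
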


\begin{proof}
Once again, the proof will follow from the corresponding computation that is performed in the rook monoid
(the Renner monoid of $M_n$). 
In that case, by explicitly computing the matrix products $r_j e$ ($j\in \{1,\dots, n-1\}$),
one sees that $r_j e = e r_j = e$ if and only if $j\in \{d+1,\dots, n-1\}$.
It follows immediately from this observation that $s_j = r_j r_{n-j}$ stabilizes $e$ if and only if $j \in \{d+1,\dots, l\}$.
It also follows that if the rank of $e$ is $l$, then $s_l$ does not stabilize $e$, hence, $(W_G)_*(e) = \{1\}$. 
This finishes the proof. 
\end{proof}

\section{Folding, unfolding}

Let $n$ be a positive integer. 
A collection $S_1,\dots, S_r$ of subsets of the set $S:=\{1,\dots, n\}$ is said to 
be a {\em set partition} of $S$ if $S_i$'s ($i=1,\dots, r$) are mutually disjoint and $\cup_{i=1}^r S_i = S$. 
In this case, the $S_i$'s are called the blocks of the partition.
The collection of all set partitions of $S$ is denoted by $\Pi_n$. 
We will often drop set parentheses and commas and just put vertical bars between blocks. 
If $S_1,\dots, S_k$ are the blocks of a set partition $\pi$ from $\Pi_n$, 
then the {\em standard form} of $\pi$ is defined as $S_1|S_2|\cdots |S_k$, where 
we assume that $\min S_1 <\cdots < \min S_k$
and the elements of each block are listed in increasing order. 
For example, $\pi = 136 | 2459 | 78$ is a set partition from $\Pi_9$. 
Set partitions can be visualized by using ``arc-diagrams'' which we will define next.

A linearly ordered poset is called a {\em chain}. 
We will identify chains by their Hasse diagrams; 
we draw a Hasse diagram by placing the smallest 
entry on the left and connecting the vertices by arcs.
For example, in Figure~\ref{F:chain1}, we have the chain on 9 vertices,
where each arc represents a covering relation.
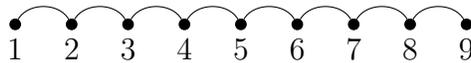
\begin{figure}[h]
\begin{center}
\begin{tikzpicture}[scale=.75]
\node[below=.05cm] at (0,0) {$1$};
\node[draw,circle, inner sep=0pt, minimum width=4pt, fill=black] (0) at (0,0) {};
\node[below=.05cm] at (1,0) {$2$};
\node[draw,circle, inner sep=0pt, minimum width=4pt, fill=black] (1) at (1,0) {};
\node[below=.05cm] at (2,0) {$3$};
\node[draw,circle, inner sep=0pt, minimum width=4pt, fill=black] (2) at (2,0) {};
\node[below=.05cm] at (3,0) {$4$};
\node[draw,circle, inner sep=0pt, minimum width=4pt, fill=black] (3) at (3,0) {};
\node[below=.05cm] at (4,0) {$5$};
\node[draw,circle, inner sep=0pt, minimum width=4pt, fill=black] (4) at (4,0) {};
\node[below=.05cm] at (5,0) {$6$};
\node[draw,circle, inner sep=0pt, minimum width=4pt, fill=black] (5) at (5,0) {};
\node[below=.05cm] at (6,0) {$7$};
\node[draw,circle, inner sep=0pt, minimum width=4pt, fill=black] (6) at (6,0) {};
\node[below=.05cm] at (7,0) {$8$};
\node[draw,circle, inner sep=0pt, minimum width=4pt, fill=black] (7) at (7,0) {};
\node[below=.05cm] at (8,0) {$9$};
\node[draw,circle, inner sep=0pt, minimum width=4pt, fill=black] (8) at (8,0) {};
\draw[color=black] (0) to [out=60,in=120] (1);
\draw[color=black] (1) to [out=60,in=120] (2);
\draw[color=black] (2) to [out=60,in=120] (3);
\draw[color=black] (3) to [out=60,in=120] (4);
\draw[color=black] (4) to [out=60,in=120] (5);
\draw[color=black] (5) to [out=60,in=120] (6);
\draw[color=black] (6) to [out=60,in=120] (7);
\draw[color=black] (7) to [out=60,in=120] (8);
\end{tikzpicture}
\caption{A chain on 9 vertices.}
\label{F:chain1}
\end{center}
\end{figure}

\begin{Definition}
A {\em labeled chain} is a chain whose vertices are labeled by distinct numbers. 
An {\em arc-diagram on $n$ vertices} is a disjoint union of labeled chains where the labels are from $\{1,\dots,n\}$
and each label $i\in \{1,\dots, n\}$ is used exactly once. 
We depict an example in Figure~\ref{F:introexample}.  
\end{Definition}
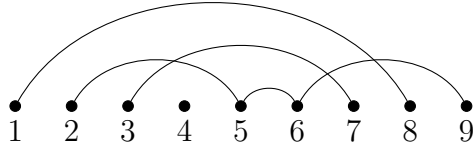
\begin{figure}[h]
\begin{center}
\begin{tikzpicture}[scale=.75]
\node[below=.05cm] at (0,0) {$1$};
\node[draw,circle, inner sep=0pt, minimum width=4pt, fill=black] (0) at (0,0) {};
\node[below=.05cm] at (1,0) {$2$};
\node[draw,circle, inner sep=0pt, minimum width=4pt, fill=black] (1) at (1,0) {};
\node[below=.05cm] at (2,0) {$3$};
\node[draw,circle, inner sep=0pt, minimum width=4pt, fill=black] (2) at (2,0) {};
\node[below=.05cm] at (3,0) {$4$};
\node[draw,circle, inner sep=0pt, minimum width=4pt, fill=black] (3) at (3,0) {};
\node[below=.05cm] at (4,0) {$5$};
\node[draw,circle, inner sep=0pt, minimum width=4pt, fill=black] (4) at (4,0) {};
\node[below=.05cm] at (5,0) {$6$};
\node[draw,circle, inner sep=0pt, minimum width=4pt, fill=black] (5) at (5,0) {};
\node[below=.05cm] at (6,0) {$7$};
\node[draw,circle, inner sep=0pt, minimum width=4pt, fill=black] (6) at (6,0) {};
\node[below=.05cm] at (7,0) {$8$};
\node[draw,circle, inner sep=0pt, minimum width=4pt, fill=black] (7) at (7,0) {};
\node[below=.05cm] at (8,0) {$9$};
\node[draw,circle, inner sep=0pt, minimum width=4pt, fill=black] (8) at (8,0) {};
\draw[color=black] (6) to [out=120,in=60] (2);
\draw[color=black] (7) to [out=120,in=60] (0);
\draw[color=black] (4) to [out=120,in=60] (1);
\draw[color=black] (5) to [out=120,in=60] (4);
\draw[color=black] (8) to [out=120,in=60] (5);
\end{tikzpicture}
\caption{An arc-diagram on 9 vertices}
\label{F:introexample}
\end{center}
\end{figure}
Clearly, in an arc-diagram subchains represents the blocks of the corresponding set partition.
We know from~\cite[Lemma 1.17]{Bona} that the number of set partitions of $S=\{1,\dots, n\}$ into $k$ blocks, denoted by $S(n,k)$, and called the {\em $(n,k)$-th Stirling number of the second kind}, is given by the formula $S(n,k) = \frac{1}{k!} \sum_{i=1}^k (-1)^i {k\choose i} (k-i)^n$.
The recurrence formula for the Stirling numbers of the second kind is well-known:
\begin{align*}
S(l+1,k) = S(l,k-1) + k S(l,k),
\end{align*}
where 
\begin{align*}
S(l,k) = 
\begin{cases}
1 & \text{ if $l=k=0$};\\
0 & \text{ if $l>0$ and $k=0$};\\
0 & \text{ if $l<0$ or $k<0$ or $l<k$}.
\end{cases}
\end{align*}

Let $\mc{B}_{n}$ denote the submonoid of $\mc{R}_n$ such that if $x\in \mc{B}_n$, then $x$ is an upper triangular matrix.
The subsemigroup of nilpotent elements in $\mc{B}_n$ will be denoted by $\mc{B}_n^{nil}$.
For each $A$ in $\mc{B}_{n}$, there exists a unique $(n+1)\times (n+1)$ nilpotent matrix, $\tilde{A}$, 
which is obtained from $A$ by appending to it a column and a row of zeros as follows: 
\begin{align}\label{A:AtoAtilde}
A \longmapsto \tilde{A} :=
\begin{bmatrix}
0 & & \\
\vdots & A & \\
0 & \dots & 0
\end{bmatrix} \in \mc{B}_{n+1}\qquad (A\in \mc{B}_{n}).
\end{align}
In this notation, it is easily verified that (\ref{A:AtoAtilde}) defines a set bijection 
$\mc{B}_{n} \longrightarrow \mc{B}_{n+1}^{nil}$. 
There is a simple bijection between $\mc{B}^{nil}_{n+1}$ and $\Pi_{n+1}$ which is defined as follows:
the matrix corresponding to the set partition $A$ has an entry equal to $1$ in row $i$ and and column $j$ if and only if
$(i,j)$ is an arc of $A$. Therefore, for $k \in \{1,\dots, n+1\}$, we have 
\begin{align}\label{A:Stirlingcount}
S(n+1,k) = | \{ A\in \mc{B}_n :\ \text{rank} A = n+1-k \} |.
\end{align}
It follows from the bijections above that the number of elements of $\mc{B}_n$ is given by the summation $b_{n+1}:=\sum_{k=0}^{n+1} S(n+1, k)$, 
which is called the {\em $(n+1)$th Bell number}. 
As a convention, we set $b_0=1$ and $b_1 =1$.

It is easy to check that the number of elements of $\mc{R}_n$ of rank $k$ is given by 
\begin{align}\label{A:rankkinRn}
| \{ A\in \mc{R}_n :\ \text{rank}(A)= k\} | = {n \choose k} \frac{n!}{(n-k)!}.
\end{align}
We will express this cardinality by using Stirling numbers of the second kind.

Every element $A$ of $\mc{R}_n$ has a triangular decomposition in $\mc{R}_n$,
\begin{align}\label{A:triangular}
A= A_l + A_d + A_u,
\end{align}
where $A_l$ is a strictly lower triangular matrix, $A_d$ is a diagonal matrix, and $A_u$ is a strictly upper triangular matrix. 

\begin{Proposition}\label{P:triangular}
Let $S_{a,b,c}(n)$ denote the number of elements $A\in \mc{R}_n$ such that 
$\text{rank} (A_l) = a, \text{rank} (A_d) = b$, and $\text{rank} (A_u) = c$, 
where $A_l,A_d$, and $A_u$ are as in (\ref{A:triangular}). 
Then we have 
\begin{align*}
{n \choose k} \frac{n!}{(n-k)!} & = \sum_{a+b+c = k} S_{a,b,c}(n) \\
&=  \sum_{a+b+c = k} {n \choose b} S(n+1,n+1-a) S(n+1,n+1-c).
\end{align*}
\end{Proposition}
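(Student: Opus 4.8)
The plan is to read the displayed formula as a chain of equalities and to establish the two equalities by different means: the first is a bookkeeping decomposition, and the second is a bijective count built on the set-partition machinery of this section.

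\textbf{First equality.} I would begin with the observation that the three summands of the triangular decomposition (\ref{A:triangular}) occupy pairwise disjoint sets of cells, namely those strictly below, on, and strictly above the main diagonal, and that each of $A_l$, $A_d$, $A_u$ is itself a partial permutation matrix. Since the rank of a $0/1$ partial permutation matrix is exactly its number of nonzero entries, we get $\text{rank}(A) = \text{rank}(A_l) + \text{rank}(A_d) + \text{rank}(A_u)$. Consequently the set of rank-$k$ rooks is the disjoint union, over triples $(a,b,c)$ with $a+b+c=k$, of the sets counted by $S_{a,b,c}(n)$, sorted by the profile $(\text{rank}(A_l),\text{rank}(A_d),\text{rank}(A_u))$. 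Summing the block sizes and comparing with the total count $\binom{n}{k}\frac{n!}{(n-k)!}$ from (\ref{A:rankkinRn}) yields the first equality in a single line.

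\textbf{Second equality.} Here the strategy is to describe the rooks counted by $S_{a,b,c}(n)$ through the triple of data consisting of the diagonal support, the strictly lower part, and the strictly upper part, and then to evaluate each piece with the tools developed earlier. The idempotent $A_d$ is determined by its support, a $b$-subset of $\{1,\dots,n\}$, which accounts for the factor $\binom{n}{b}$. For the off-diagonal parts I would invoke the bijection (\ref{A:AtoAtilde}) between upper triangular rooks and nilpotent matrices, together with the resulting identification of $\mc{B}_{n+1}^{nil}$ with the set partitions of $\{1,\dots,n+1\}$; by (\ref{A:Stirlingcount}) this identifies the number of upper triangular rooks of rank $r$ in $\mc{R}_n$ with $S(n+1,n+1-r)$. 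Transposition converts the strictly lower part into a strictly upper one, so the lower contribution should be governed by $S(n+1,n+1-a)$ and the upper contribution by $S(n+1,n+1-c)$. The folding and unfolding operators of this section are precisely the device that repackages the strictly upper, respectively strictly lower, entries of $A$ into an upper triangular rook whose rank is read off by the Stirling index.

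\textbf{The main obstacle.} The delicate point, and the step I expect to be the crux, is that $A_l$, $A_d$, and $A_u$ are \emph{not} freely independent: all of their nonzero entries must lie in distinct rows and distinct columns, so fixing the diagonal support deletes rows and columns from the board supporting the off-diagonal parts, and the lower and upper parts must further avoid one another. A naive product of three counts ignores this coupling, so the heart of the proof is to show that, after applying the unfolding operator, the mutual non-attacking constraints are absorbed and the weighted count reorganizes into the binomial factor $\binom{n}{b}$ times two Stirling factors, once the summation over $a+b+c=k$ is carried out. I would therefore spend the bulk of the argument verifying this reorganization, taking care that the unfolding accounts for the shift from the board of size $n$ to the Stirling index $n+1$; the two boundary cases $k=0$ and $k=n$ can be checked directly. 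With the reorganization in hand, summing over all admissible $(a,b,c)$ delivers the second equality and completes the proof.
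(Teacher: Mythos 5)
Your treatment of the first equality is correct and coincides with the paper's: the rank of a rook matrix equals its number of nonzero entries, so the ranks of $A_l$, $A_d$, $A_u$ add up to $\text{rank}(A)$, and summing $S_{a,b,c}(n)$ over $a+b+c=k$ counts all rank-$k$ rooks, whose number is given by (\ref{A:rankkinRn}). The problem is the second equality. Your proposal correctly isolates the crux --- the three triangular parts are coupled by the non-attacking condition, so a naive product of three counts is wrong --- but it then defers exactly that step (``the heart of the proof is to show that \dots\ the constraints are absorbed''), and this step cannot be carried out, because the asserted identity is false. Take $n=2$, $k=1$: the left-hand side is $\binom{2}{1}\frac{2!}{1!}=4$, and indeed $\sum_{a+b+c=1}S_{a,b,c}(2)=1+2+1=4$, but the right-hand side is $\binom{2}{0}S(3,2)S(3,3)+\binom{2}{1}S(3,3)S(3,3)+\binom{2}{0}S(3,3)S(3,2)=3+2+3=8$. (Already $n=1$, $k=1$ gives $1\neq 3$.) Termwise the discrepancy is visible in $S_{1,0,0}(2)=1$ (only $E_{21}$) versus $\binom{2}{0}S(3,2)S(3,3)=3$.

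Two separate defects are at play, and your outline inherits both. First, an index shift: by (\ref{A:Stirlingcount}), the quantity $S(n+1,n+1-r)$ counts upper triangular rank-$r$ rooks with diagonal entries allowed, whereas $A_u$ (and, after transposing, $A_l$) is \emph{strictly} triangular; the strictly upper triangular rank-$r$ rooks in $\mc{R}_n$ number $S(n,n-r)$. Second --- and this is fatal --- even after that correction the product $\binom{n}{b}S(n,n-a)S(n,n-c)$ still overcounts because of precisely the coupling you identified: for example $S_{1,1,0}(2)=0$ (a $2\times 2$ rook cannot carry a strictly-lower entry and a diagonal entry simultaneously), yet $\binom{2}{1}S(2,1)S(2,2)=2$; summing over $a+b+c=2$ gives $6\neq 2$. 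So no reorganization, via unfolding or otherwise, can make the count factor as displayed. You should also know that the paper's own one-line proof commits exactly the error you flagged: it multiplies $\binom{n}{b}$ by the two Stirling counts as though $A_l$, $A_d$, $A_u$ could be chosen independently, with no justification. Your instinct about where the difficulty lies was right; the correct conclusion is that the second equality is false as stated (and Theorem~\ref{T:formula}, which invokes it termwise, inherits the problem), not that the factorization can be rescued.
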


\begin{proof}
The number of strictly upper triangular elements of rank $k$ in $\mc{R}_n$ is equal to 
the number of strictly lower triangular rank $k$ elements in $\mc{R}_n$. 
Now the proof of the first equality follows from the equality in (\ref{A:rankkinRn}) and the uniqueness of the triangular decomposition in (\ref{A:triangular}).
The proof of the second equality follows from (\ref{A:Stirlingcount}) together with the fact that there are exactly 
${n \choose b}$ diagonal 0/1 matrices of rank $b$.
\end{proof}

We proceed with the assumption that $n$ is an even number of the form $n=2l$, $l\in \Z_+$. 
In the sequel, we will count the number of elements of $\mc{B}_G$, where $G=Sp_n$, 
by a technique that we call {\em unfolding}.
But before that we want to demonstrate that the elements of $\mc{R}_G$ behave well under ``folding''.
We already mentioned the result of Li and Renner~\cite[Theorem 3.1.10]{LiRenner}, which states that, 
if an element $A$ from $\mc{R}_G$ is singular, then both of the domain and the range of $A$ are admissible
subsets in $\{1,\dots, n\}$. 
Furthermore, the elements of $D(A)$ correspond to the indices of the nonzero columns of $A$, 
and the elements of $R(A)$ correspond to the indices of the nonzero rows of $A$.  
This shows that $A$ can be folded vertically as well as horizontally. 
We will demonstrate what we mean here by an example. 
\begin{Example}
In this example, we fold an element of $\mc{R}_{Sp_8}$ horizontally from top to bottom: 
\begin{center}
\begin{tikzpicture}[node distance=-1ex]
  \matrix (mymatrix) at (-5,0) [matrix of math nodes,left delimiter={[},right
delimiter={]}]
  {
1 & 0 & 0 & 0 & 0 & 0 & 0 & 0 \\
0 & 0 & 0 & 0 & 1 & 0 & 0 & 0 \\
0 & 0 & 0 & 0 & 0 & 0  & 0 & 0 \\
0 & 0 & 0 & 0 & 0 & 0  & 0 & 0 \\
0 & 0 & 1 & 0 & 0 & 0  & 0 & 0 \\
0 & 0 & 0 & 0 & 0 & 0  & 1 & 0 \\
0 & 0 & 0 & 0 & 0 & 0  & 0 & 0 \\
0 & 0 & 0 & 0 & 0 & 0  & 0 & 0 \\
  };
  
\draw[thick, dashed, red] (mymatrix-4-1.south west) -- (mymatrix-4-8.south east);
\draw[thick, ->] (-2,0) -- (2,0);
  \matrix (mysecondmatrix) at (5,0) [matrix of math nodes,left delimiter={[},right
delimiter={]}]
  {
0 & 0 & 1 & 0 & 0 & 0 & 0 & 0 \\
0 & 0 & 0 & 0 & 0 & 0 & 1 & 0 \\
0 & 0 & 0 & 0 & 1 & 0  & 0 & 0 \\
1 & 0 & 0 & 0 & 0 & 0  & 0 & 0 \\
  };
\end{tikzpicture}
\end{center}
\end{Example}

\begin{Example}
In this example, we fold the matrix of the previous example vertically from left to right: 
\begin{center}
\begin{tikzpicture}[node distance=-1ex]
  \matrix (mymatrix) at (-5,0) [matrix of math nodes,left delimiter={[},right
delimiter={]}]
  {
1 & 0 & 0 & 0 & 0 & 0 & 0 & 0 \\
0 & 0 & 0 & 0 & 1 & 0 & 0 & 0 \\
0 & 0 & 0 & 0 & 0 & 0  & 0 & 0 \\
0 & 0 & 0 & 0 & 0 & 0  & 0 & 0 \\
0 & 0 & 1 & 0 & 0 & 0  & 0 & 0 \\
0 & 0 & 0 & 0 & 0 & 0  & 1 & 0 \\
0 & 0 & 0 & 0 & 0 & 0  & 0 & 0 \\
0 & 0 & 0 & 0 & 0 & 0  & 0 & 0 \\
  };
  
\draw[thick, dashed, red] (mymatrix-1-4.north east) -- (mymatrix-8-4.south east);
\draw[thick, ->] (-2,0) -- (1,0);
  \matrix (mysecondmatrix) at (3,0) [matrix of math nodes,left delimiter={[},right
delimiter={]}]
  {
0 & 0 & 0 & 1 \\
1 & 0 & 0 & 0 \\
0 & 0  & 0 & 0 \\
0 & 0  & 0 & 0 \\
0 & 1  & 0 & 0 \\
0 & 0  & 1 & 0 \\
0 & 0  & 0 & 0 \\
0 & 0  & 0 & 0 \\
  };
\end{tikzpicture}
\end{center}
\end{Example}

\begin{Definition}
We will denote the horizontal folding operation from top to bottom by $F_{TB}$.
Likewise, we will denote the vertical folding operation from left to right by $F_{LR}$. 
\end{Definition}

Clearly, the operations $F_{TB}$ and $F_{LR}$ can be composed.
In fact, they commute, 
\begin{align}\label{A:composition}
F_{TB} F_{LR} = F_{LR} F_{TB}.
\end{align}

Let $F$ denote the composition of the folding operators as in (\ref{A:composition}).
We will refer to $F$ by the {\em folding map}.

\begin{Proposition}
The folding map is a surjective map from $\mc{R}_{Sp_n}$ onto the rook monoid $\mc{R}_l$. 
Furthermore, the restricted folding map, $F\vert_{\mc{B}_G}$, which we will denote by  $F'$, 
is surjective as well. 
\end{Proposition}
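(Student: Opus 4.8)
The plan is to prove surjectivity by exhibiting an explicit ``unfolding'' that inverts the folding map on a convenient set of preimages. First I would record precisely how $F$ acts on matrix positions. Writing $n=2l$, the operator $F_{LR}$ keeps old columns $l+1,\dots,2l$ as the new columns $1,\dots,l$ (the old column $l+c$ becoming the new column $c$) and folds the old column $j\le l$ onto the old column $2l+1-j=\theta(j)$, so that it contributes to the new column $l+1-j$; the operator $F_{TB}$ does the same for rows. Consequently a single nonzero entry in position (old row $i$, old column $j$) is sent by $F$ to the new position whose row index is $i-l$ or $l+1-i$ according as $i>l$ or $i\le l$, and whose column index is $j-l$ or $l+1-j$ according as $j>l$ or $j\le l$. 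Admissibility of $D(x)$ and $R(x)$ for the singular elements, together with the central symmetry of the elements of $W_G$ (which forces a $\theta$-paired pair of entries to land in one and the same cell), guarantees that the rook condition is never violated, so that $F$ is a well-defined map $\mc{R}_G\to\mc{R}_l$. This well-definedness is the direction I regard as routine.

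For surjectivity I would use a single construction that settles both assertions at once. Given a target rook $\sigma\in\mc{R}_l$, with a $1$ in position $(j,c)$ for each $c\in D(\sigma)$, I define an $n\times n$ $0/1$-matrix $A$ by placing a $1$ in position (old row $l+1-j$, old column $l+c$). In other words, $A$ is obtained by placing $\sigma$ into the top-right $l\times l$ block, that is, rows $1,\dots,l$ and columns $l+1,\dots,2l$, after reflecting its rows. Since distinct columns $c$ give distinct rows $j$, the matrix $A$ is a genuine rook. Because its support lies entirely in rows $\{1,\dots,l\}$ and columns $\{l+1,\dots,2l\}$, both $R(A)\subseteq\{1,\dots,l\}$ and $D(A)\subseteq\{l+1,\dots,2l\}$ are admissible, each being disjoint from its image under $\theta$, which lands in the complementary half; and $A$ is singular, its rank being at most $l<n$. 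By the Li--Renner description (\ref{A:LiRenner}) we conclude $A\in\mc{R}_G$. Moreover every nonzero entry of $A$ sits in a row of index $\le l$ and a column of index $\ge l+1$, hence strictly above the main diagonal, so $A$ is upper triangular and therefore lies in $\mc{B}_G$.

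It then remains to check that $F(A)=\sigma$. Using the index bookkeeping from the first paragraph, the old row $l+1-j\le l$ is sent to the new row $l+1-(l+1-j)=j$, while the kept old column $l+c$ is sent to the new column $c$. Since the support of $A$ avoids rows $l+1,\dots,2l$ and columns $1,\dots,l$, every $\theta$-paired cell is empty and no entries interfere during the fold; thus the $1$ of $A$ arising from the $(j,c)$-entry of $\sigma$ reappears in position $(j,c)$ of $F(A)$, so $F(A)=\sigma$. As $A\in\mc{B}_G\subseteq\mc{R}_G$, this simultaneously shows that $F\colon\mc{R}_G\to\mc{R}_l$ and its restriction $F'=F\vert_{\mc{B}_G}$ are surjective onto $\mc{R}_l$.

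I expect the only real subtlety to be the index reflection. The fold reverses the order of the top rows, so placing $\sigma$ into the top-right block \emph{without} reflecting its rows would recover $\sigma$ with its rows reversed rather than $\sigma$ itself; getting this compensating reflection right is the one point that needs care. The complementary fact that drives the argument is that the top-right placement is automatically upper triangular, which is exactly what makes the preimage land in $\mc{B}_G$ (and not merely in $\mc{R}_G$), so that the stronger statement about $F'$ contains the statement about $F$. Everything else reduces to the verification of admissibility and the rank count, both of which are immediate from the location of the support.
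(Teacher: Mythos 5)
Your proof is correct, and it follows the same overall strategy as the paper: for each target rook in $\mc{R}_l$, exhibit an explicit preimage lying already in $\mc{B}_G$, verify that it is a singular rook with admissible domain and range (hence lies in $\mc{R}_G$ by the Li--Renner description (\ref{A:LiRenner})), observe that it is upper triangular, and check that folding returns the target --- so that surjectivity of $F'$ yields surjectivity of $F$ for free. The one genuine difference is the choice of preimage. The paper first takes the triangular decomposition $A = A_l + A_d + A_u$ and unfolds only the strictly lower part into the row-reflected top-right block, keeping $A_d + A_u$ in the bottom-right block, i.e., it uses
$B=\begin{bmatrix}\mathbf{0} & J_l A_l\\ \mathbf{0} & A_d+A_u\end{bmatrix}$;
you instead place the entire reflected matrix $J_l\sigma$ into the top-right block and leave the bottom-right block zero. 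Your choice is leaner: upper triangularity and admissibility of both the domain and the range become immediate, since the whole support sits in rows $\leq l$ and columns $\geq l+1$, whereas the paper's range-admissibility check implicitly uses the rook property of $A$ to ensure that $R(A_l)$ and $R(A_d+A_u)$ do not create a $\theta$-paired collision. What the paper's more elaborate preimage buys is alignment with the counting argument of Theorem~\ref{T:formula}, where the fibers of $F'$ are enumerated factor-by-factor according to the triangular decomposition; your single-block preimage is simply one particular point of that fiber, which is all surjectivity requires. Your index bookkeeping for the fold (old row $i\leq l$ goes to new row $l+1-i$, old column $j>l$ to new column $j-l$) matches the paper's worked examples, and the compensating row reflection you flag is indeed the only place where a sign-of-index error could sink the construction.
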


\begin{proof}
If we show that $F'$ is surjective, then the surjectivity of $F$ will follow.
To this end, let $A$ be an element from $\mc{R}_l$, and let $A= A_l + A_d + A_u$ be its triangular decomposition. 
Recall from the introduction that $J_l$ denotes the $l\times l$ permutation matrix with 1's on its anti-diagonal.
Now we define an $n\times n$ matrix $B$ as follows:
$
B:= 
\begin{bmatrix}
\mathbf{0} & \widetilde{A_l} \\
\mathbf{0} & A_d + A_u 
\end{bmatrix},
$
where $\mathbf{0}$ is the $l\times l$ 0 matrix, and $\widetilde{A_l} := J_l A_l$.
In other words, $\widetilde{A_l}$ is the matrix whose $i$th row is the $(l-i+1)$th row of $A_l$.
Since the indices of the nonzero columns of $B$ are contained in the set $\{l+1,\dots, n\}$,
the domain of $B$ is an admissible subset in $\{1,\dots, n\}$. 
It is also easy to check that the set of row indices of $B$ is an admissible subset of $\{1,\dots, n\}$.
Clearly, $B$ is upper triangular matrix, therefore, $B\in \mc{B}_G$. 
Finally, by its construction, the image of $B$ under $F'$ is equal to $A$, $F'(B) =F(B)= A$. 
This finishes the proofs of our assertions. 
\end{proof}

We are now ready to count the number of elements of $\mc{B}_G$ by ``unfolding'' the elements of 
$\mc{R}_l$ first horizontally from bottom to top, and then vertically from right to left. 
We will demonstrate our count by an example.

\begin{Example}\label{E:indicates}
We will compute the preimage of $J_2 =\begin{bmatrix} 0 & 1 \\ 1 & 0 \end{bmatrix}$ 
under the restricted folding map $F' : \mc{B}_{Sp_4}\to \mc{R}_2$.
Equivalently, we will determine the set $F_{LR}^{-1} (F_{TB}^{-1}(J_2)) \cap \mc{B}_{Sp_4}$. 
Since we are looking for the upper triangular elements in the preimage, the lower halves 
of the $4\times 2$ matrices in $F_{TB}^{-1}(J_2)$ must be upper triangular. 
The following matrices are the possibilities: 

\begin{center}
\begin{tikzpicture}[node distance=-1ex]
\matrix (mymatrix1) at (-5,0) [matrix of math nodes,left delimiter={[},right
delimiter={]}]
  {
1 & 0 \\
\node (a21) {0}; & \node (a22) {0}; \\  
0 & 1 \\
0 & 0 \\
  };
  \draw [dashed,red,thick] (a21.south west) -- (a22.south east);
\matrix (mymatrix2) at (5,0) [matrix of math nodes, left delimiter={[},right
delimiter={]}]
  {
1 & 0 \\
\node (b21) {0}; & \node (b22) {1}; \\  
0 & 0 \\
0 & 0 \\
  };
    \draw [dashed,red,thick] (b21.south west) -- (b22.south east);
\matrix (mymatrix) at (0,-4) [matrix of math nodes,left delimiter={[},right
delimiter={]}]
  {
0 & 1 \\
1 & 0 \\
  };
\draw[thick, ->] (-3.5,-1.5) -- (-1,-3);
\draw[thick, ->] (3.5,-1.5) -- (1,-3);
\node at (-2,-2) {$F_{TB}$};
\node at (1.8,-2) {$F_{TB}$};
\end{tikzpicture}
\end{center}
Let $A_1$ denote the $4\times 2$ matrix that is on the top-left position, 
and let $A_2$ denote the $4\times 2$ matrix that is on the top-right position.
Then, the following two matrices are mapped onto $A_1$ by $F_{LR}$:
\begin{center}
\begin{tikzpicture}[node distance=-1ex]
\matrix (mymatrix1) at (-4.5,0) [matrix of math nodes,left delimiter={[},right
delimiter={]}]
  {
0 & \node (a12) {0}; & 1 & 0 \\
0 & 0 & 0 & 0 \\
0 & 0 & 0 & 1 \\
0 & \node (a42) {0}; & 0 & 0 \\
  };
  \draw [dashed,red,thick] (a12.north east) -- (a42.south east);
\matrix (mymatrix2) at (-1,0) [matrix of math nodes, left delimiter={[},right
delimiter={]}]
  {
0 & \node (b12) {1}; & 0 & 0 \\
0 & 0 & 0 & 0 \\
0 & 0 & 0 & 1 \\
0 & \node (b42) {0}; & 0 & 0 \\
  };
    \draw [dashed,red,thick] (b12.north east) -- (b42.south east);
    \draw[thick, ->] (1,0) -- (2,0);
        \node at (1.5,.5) {$F_{LR}$};
    \node at (-2.8,0) {$,$};
        \matrix (mymatrix1) at (3.5,0) [matrix of math nodes,left delimiter={[},right
delimiter={]}]
  {
1 & 0 \\
0 & 0 \\  
0 & 1 \\
0 & 0 \\
  };
\end{tikzpicture}
\end{center}
Likewise, the following two matrices are folded onto $A_2$ by $F_{LR}$:
\begin{center}
\begin{tikzpicture}[node distance=-1ex]
\matrix (mymatrix1) at (-4.5,0) [matrix of math nodes,left delimiter={[},right
delimiter={]}]
  {
0 & \node (a12) {1}; & 0 & 0 \\
0 & 0 & 0 & 1 \\
0 & 0 & 0 & 0 \\
0 & \node (a42) {0}; & 0 & 0 \\
  };
  \draw [dashed,red,thick] (a12.north east) -- (a42.south east);
\matrix (mymatrix2) at (-1,0) [matrix of math nodes, left delimiter={[},right
delimiter={]}]
  {
0 & \node (b12) {0}; & 1 & 0 \\
0 & 0 & 0 & 1 \\
0 & 0 & 0 & 0 \\
0 & \node (b42) {0}; & 0 & 0 \\
  };
    \draw [dashed,red,thick] (b12.north east) -- (b42.south east);
    \draw[thick, ->] (1,0) -- (2,0);
        \node at (1.5,.5) {$F_{LR}$};
    \node at (-2.8,0) {$,$};
        \matrix (mymatrix1) at (3.5,0) [matrix of math nodes,left delimiter={[},right
delimiter={]}]
  {
1 & 0 \\
0 & 1 \\  
0 & 0 \\
0 & 0 \\
  };
\end{tikzpicture}
\end{center}
We find that, in total, there are four matrices that fold onto $J_2$. 
\end{Example}

We are now ready to present a formula for the number of elements of $\mc{B}_G$ that lie in the preimage of the folding map $F'$. 
\begin{Theorem}\label{T:formula}
The number of elements of rank $k$ in $\mc{B}_G$ is given by the formula
\begin{align}
\sum_{a+b+c =k} 2^{a+c}3^b   {l \choose b} S(l+1, l+1-a) S(l+1, l+1-c),
\end{align}
where $(a,b,c)\in \Z_{\geq 0}^3$.
\end{Theorem}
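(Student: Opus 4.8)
The plan is to compute $|\mc{B}_G(k)|$ by fibering $\mc{B}_G$ over the rook monoid $\mc{R}_l$ through the folding map $F'\colon\mc{B}_G\to\mc{R}_l$ and summing the fiber sizes. The first thing I would record is that $F'$ preserves rank. Indeed, any $B\in\mc{B}_G$ is a symplectic rook, so its domain and range are admissible; hence no two nonzero entries of $B$ occupy a $\theta$-paired column or a $\theta$-paired row, and the two foldings $F_{LR}$ and $F_{TB}$ relocate the nonzero entries of $B$ injectively into an $l\times l$ rook $A=F'(B)$ having the same number of $1$'s. Thus $\text{rank}(B)=\text{rank}(A)$, and it suffices to evaluate
\[
|\mc{B}_G(k)| = \sum_{\substack{A\in\mc{R}_l\\ \text{rank}(A)=k}} |(F')^{-1}(A)|.
\]

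Next I would determine the size of the fiber over a fixed $A\in\mc{R}_l$ by analyzing the unfolding $(F')^{-1}(A)=F_{LR}^{-1}\bigl(F_{TB}^{-1}(A)\bigr)\cap\mc{B}_G$. Reading the folding rule off the examples, the entry in row $i$ of the folded matrix comes from original row $l+1-i$ (a reflected ``top'' row) or $i+l$ (a ``bottom'' row), and the entry in column $j$ comes from original column $l+1-j$ or $j+l$. For distinct rows $i\ne i'$ the preimage pairs $\{l+1-i,\,i+l\}$ and $\{l+1-i',\,i'+l\}$ are disjoint (similarly for columns), so each nonzero entry of $A$, occupying a unique row and a unique column, is unfolded by an \emph{independent} choice of one row-preimage and one column-preimage. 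Moreover every such choice yields a genuine symplectic rook, since each selected row (resp.\ column) is exactly one representative of its $\theta$-pair, forcing the domain and range of the unfolded matrix to be admissible. The only surviving constraint is therefore that the unfolded matrix lie in $\mc{B}_n$, i.e.\ be upper triangular, which is the per-entry condition $R\le C$ on the chosen original row $R$ and column $C$.

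For a nonzero entry of $A$ in row $i$ and column $j$ I would then test the four candidate placements $(R,C)\in\{l+1-i,\,i+l\}\times\{l+1-j,\,j+l\}$ against $R\le C$: the placement $(i+l,\,l+1-j)$ never satisfies $R\le C$; the placement $(l+1-i,\,j+l)$ always does; while $(l+1-i,\,l+1-j)$ requires $i\ge j$ and $(i+l,\,j+l)$ requires $i\le j$. Consequently an entry strictly below the diagonal ($i>j$) admits exactly $2$ valid placements, an entry on the diagonal ($i=j$) admits exactly $3$, and an entry strictly above the diagonal ($i<j$) admits exactly $2$. Writing the triangular decomposition $A=A_l+A_d+A_u$ with $\text{rank}(A_l)=a$, $\text{rank}(A_d)=b$, $\text{rank}(A_u)=c$, independence of the per-entry choices gives
\[
|(F')^{-1}(A)| = 2^{a}\,3^{b}\,2^{c} = 2^{a+c}3^{b}.
\]
As a check, for $A=J_2$ this is $2^{1+1}3^{0}=4$, matching Example~\ref{E:indicates}.

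Finally, grouping the rank-$k$ elements $A\in\mc{R}_l$ by the triple $(a,b,c)$ with $a+b+c=k$ and invoking Proposition~\ref{P:triangular} (with $n$ replaced by $l$) to count them as $S_{a,b,c}(l)={l\choose b}S(l+1,l+1-a)S(l+1,l+1-c)$, I would obtain
\[
|\mc{B}_G(k)| = \sum_{a+b+c=k} 2^{a+c}3^{b}\,{l\choose b}\,S(l+1,l+1-a)\,S(l+1,l+1-c),
\]
as claimed. The main obstacle is the middle step: one must argue carefully that the unfolding choices are genuinely independent across entries and automatically produce admissible (hence symplectic) matrices, so that upper-triangularity is the sole remaining restriction, and then verify the $2/3/2$ local count. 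Once this local analysis is secured, the summation is an immediate consequence of Proposition~\ref{P:triangular}.
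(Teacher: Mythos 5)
Your strategy is the same as the paper's: fiber $\mc{B}_G$ over $\mc{R}_l$ via the folding map $F'$, show that the fiber over an element of triangular type $(a,b,c)$ has exactly $2^{a+c}3^b$ elements, and then pass from $\sum_{a+b+c=k}2^{a+c}3^b\,S_{a,b,c}(l)$ to the stated closed form by Proposition~\ref{P:triangular}. Your middle step is in fact done more carefully than in the paper: the per-entry $2/3/2$ placement analysis, together with the disjointness of the $\theta$-pairs $\{l+1-i,\,l+i\}$ and the observation that any system of placements automatically has admissible domain and range, is a complete justification of the fiber count, whereas the paper only sketches the factorization (\ref{A:threefactors}) and counts the three pieces separately. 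Up to this point your argument is sound, and it reproduces, e.g., the fiber sizes $3+3+2+2=10$ over the four rank-one elements of $\mc{R}_2$.

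The genuine gap is in your final step, and it is inherited from the paper rather than introduced by you. You invoke Proposition~\ref{P:triangular} for the term-wise identity $S_{a,b,c}(l)={l \choose b}S(l+1,l+1-a)S(l+1,l+1-c)$, but the proposition asserts only that the two expressions have equal \emph{sums} over $a+b+c=k$, which is what your weighted sum actually requires term by term; worse, the term-wise identity is false. The product on the right treats the three triangular parts as independent, ignoring that the rows and columns occupied by $A_l$, $A_d$, $A_u$ must be pairwise disjoint for $A$ to be a rook, and it uses $S(l+1,l+1-a)$, which by (\ref{A:Stirlingcount}) counts \emph{weakly} upper triangular rank-$a$ rooks in $\mc{R}_l$, where strictly triangular ones (numbering $S(l,l-a)$) are what is needed. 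Concretely, for $l=2$ one has $S_{1,0,0}(2)=1$ (the single rook with its entry at position $(2,1)$), while ${2 \choose 0}S(3,2)S(3,3)=3$; summing over $a+b+c=1$ gives $4$ on one side and $8$ on the other, so even the summed version fails. The discrepancy propagates to the statement itself: your correct fiber computation gives $|\mc{B}_{Sp_4}(1)|=10$, which matches Figure~\ref{F:BSp4}, whereas the claimed closed form evaluates to $18$ at $l=2$, $k=1$. So the quantity $\sum_{a+b+c=k}2^{a+c}3^b\,S_{a,b,c}(l)$ produced by your (and the paper's) fiber argument is the true count, but it cannot be rewritten in the stated closed form; a correct proof must either stop at that sum or supply a valid evaluation of $S_{a,b,c}(l)$, which is not a pure product of the three individual counts.
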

\begin{proof}
Let $A$ be an element from $\mc{R}_l$ with the triangular decomposition $A= A_l + A_d + A_u$. 
Then it is easy to verify that 
\begin{align}\label{A:threefactors}
|F'^{-1}(A)\cap \mathcal{B}_{Sp_n} | = 
|F'^{-1}(A_l)\cap \mathcal{B}_{Sp_n} ||F'^{-1}(A_d)\cap \mathcal{B}_{Sp_n} ||F'^{-1}(A_u)\cap \mathcal{B}_{Sp_n} |.
\end{align}
Let us denote the matrix ranks of $A_l,A_d$, and $A_u$ by $a,b$, and $c$, respectively. 
Then we denote the three factors on the right hand side of (\ref{A:threefactors}) by the notation
$f_a(A),f_b(A)$, and $f_u(A)$, respectively. 
Our choice of the subscripts for $f_a,f_b,f_c$ will be clarified in the next two paragraphs. 

As it was shown for the special case of $J_2$ in Example~\ref{E:indicates}, 
if $B$ is an element from $F'^{-1}(A)$, then the lower $l\times l$ half of the $2l\times l$ matrix 
$F_{LR}(B)$ must be an upper triangular matrix. 
In other words, when we unfold $A$ to a $2l\times l$ matrix, all of the nonzero entries of $A_l$ are 
moved into the upper $l\times l$ portion of the resulting matrix, hence, there is a unique $2l\times l$
matrix $A_l'$ such that $F_{TB}(A_l')= A_l$.  
Moreover, for every subset of the nonzero entries of $A'_l$, there exists a unique $A_l''$ in $\mc{B}_G$ 
such that $F_{LR}(A_l'') = A_l$. 
It follows from these arguments that 
\begin{align}\label{A:fa}
f_a (A) = | F^{-1}_{LR} (A_l')| = {a \choose 0} + {a \choose 1} + \cdots + {a \choose a} = 2^a.
\end{align}
A similar argument shows that 
\begin{align}\label{A:fc}
f_c (A) = {c \choose 0} + {c \choose 1} + \cdots + {c \choose c} = 2^c.
\end{align}

We now consider the possible unfolding of the diagonal matrix $A_d$. Recall that the rank of $A_d$ is $b$. 
For every $s$ element subset of the set of nonzero entries of $A_d$, there exists a unique $2l\times l$
matrix $A_d'$ such that $F_{TB}(A_d') =A_d$ and $\text{rank}(A_d') = s$.  
Likewise, for every $r$ element subset of the set of nonzero entries of $A_d'$, there exists a unique 
$2l\times 2l$ matrix $A_d''$ such that $F_{LR}(A_d'') = A_d'$ and $\text{rank}(A_d'') =r$. 
In total, there exist $\sum_{s=0}^b \sum_{r=0}^s {s\choose r} {b\choose s}$ elements in the preimage $F'^{-1}(A_d)$. 
But this double sum has a closed form:
\begin{align}\label{A:fb}
f_b(A) = \sum_{s=0}^b \sum_{r=0}^s {s\choose r} {b\choose s} = 3^b.
\end{align}
By combining (\ref{A:fa}), (\ref{A:fc}), and (\ref{A:fb}), we find that 
$|F'^{-1}(A)\cap \mathcal{B}_{Sp_n} | = 2^a 2^c 3^b = 2^{a+c}3^b$, 
which actually depends only on the ranks of the matrices $A_l,A_d$, and $A_c$. 
Our formula now follows from Proposition~\ref{P:triangular}.
\end{proof}

\begin{Remark}

It is easy to check that $\mc{B}_{Sp_n}(1)$ is equal to $\mc{B}_n(1)$. 
The Hasse diagram of $(\mc{B}_n(1),\leq)$ is a fishnet, see~\cite[Figure 1.9]{CanCherniavsky}.
By Theorem~\ref{T:inrsn}, we know that, for every $k\in \{1,\dots, l\}$, 
$(\mc{B}_{Sp_n}(k),\leq)$ is a subposet of $(\mc{B}_n(k),\leq)$.
However, unless $k=1$, the inclusion map $\mc{B}_{Sp_n}(k)\hookrightarrow \mc{B}_{n}(k)$ does not preserve the interval 
structure. Indeed, already for $k=2$, the cardinalities of $\mc{B}_{Sp_4}(2)$ and $\mc{B}_{4}(2)$ are different. 

\end{Remark}

\section{Rationally Smooth Borel Submonoids}\label{S:RationallySmooth}

Let $M$ be a reductive monoid with zero. Let $G$ denote its unit group, which is a connected reductive group. 
Then $M$ is called a {\em semisimple monoid} if $G$ has a one-dimensional center. 
The classification of semisimple (smooth) monoids is due to Renner~\cite{Renner85}. 
It turns out that a semisimple monoid $M$ is smooth if and only if $M$ is isomorphic to the monoid of $n\times n$ matrices,
$M_n$, for some $n\in \Z_+$. 
Note that the situation for general reductive monoids is not very different;
a reductive monoid with zero is smooth if and only if $M$ is of the form 
\[
M= \left(G_0 \times \prod_{i=1}^r M_{n_i}\right)/Z,
\]
where $Z$ is a finite central torus that does not intersect the unit group of $\prod_{i=1}^r M_{n_i}$, 
and $G_0$ is a semisimple subgroup~\cite[Section 11]{Timashev03}.
The semisimple monoids whose cohomological properties are as good as one hopes for are identified by Renner also~\cite{Renner08}.
They are called ``rationally smooth'' monoids.

Let $X$ be a complex algebraic variety with $\dim X = n$, and let $x$ be a point in $X$.
The variety $X$ is called {\em rationally smooth at $x$} if there exists an open neighborhood $U$ of $x$ 
such that for all $y\in U$, the following holds:
\[
H^m (X, X\setminus \{ y\}) = 
\begin{cases}
\{0\} & \text{ if } m\neq 2n;\\
\Q & \text{ if } m =2n.
\end{cases}
\]
$X$ is called {\em rationally smooth} if it is rationally smooth at every point $x$ in $X$. 
The classification as well as various characterizations of rationally smooth reductive monoids is given in~\cite{Renner08,Renner09}.

We will now adapt another result of Renner~\cite[Theorem 2.2]{Renner09} in our setting.

\begin{Lemma}\label{L:Renner's}
Let $X$ and $Y$ be two (normal) Borel submonoids of the (normal) reductive monoids $M$ and $N$, respectively.
Assume that both of $M$ and $N$ have zero elements, and assume that there exists a finite dominant morphism of 
algebraic monoids $g : M\to N$. Under these assumptions, $X$ is rationally smooth if and only if $Y$ is rationally smooth. 
\end{Lemma}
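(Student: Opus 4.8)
The plan is to deduce the statement from the variety-theoretic principle underlying \cite[Theorem 2.2]{Renner09}: a finite dominant morphism of normal irreducible complex varieties is rationally smooth on its source if and only if it is rationally smooth on its target. Since $X$ and $Y$ are assumed normal and are irreducible (being Zariski closures of the irreducible groups $B_M$ and $B_N$), it suffices to show that $g$ restricts to a finite dominant morphism $g\vert_X : X \to Y$; the equivalence of rational smoothness then follows at once.

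First I would analyze $g$ at the level of unit groups. Since $g$ is a homomorphism of monoids it fixes the identity, $g(1)=1$, so it carries invertible elements to invertible elements, giving a homomorphism $g\vert_{G_M} : G_M \to G_N$. Because the unit group is dense in a reductive monoid and $g$ is dominant, the image $g(G_M)$ is dense in $G_N$; by Chevalley's theorem it contains a dense open subset of $G_N$, hence is open, and an open subgroup of the connected group $G_N$ equals $G_N$. Together with the finiteness of the fibers of $g$, this shows that $g\vert_{G_M}$ is an isogeny. Surjective homomorphisms of algebraic groups carry Borel subgroups onto Borel subgroups, so $g(B_M)$ is a Borel subgroup of $G_N$. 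As rational smoothness of a Borel submonoid does not depend on the chosen Borel (all are $G_N$-conjugate, hence isomorphic as varieties), I may assume $g(B_M)=B_N$.

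With this in hand the passage to the submonoids is immediate: continuity of $g$ for the Zariski topology gives $g(X)=g\bigl(\overline{B_M}\bigr)\subseteq \overline{g(B_M)}=\overline{B_N}=Y$, and since $X$ is irreducible with $g(B_M)=B_N$ dense in $Y$, the map $g\vert_X$ is dominant onto $Y$. As $X$ is closed in $M$, the restriction $g\vert_X$ is finite, and it factors through the closed subvariety $Y\subseteq N$, so $g\vert_X : X\to Y$ is a finite dominant morphism of normal irreducible varieties of the same dimension; applying the principle above completes the argument. The main obstacle is the group-theoretic step of the previous paragraph — verifying that $g$ genuinely sends a Borel onto a Borel and that the induced map on closures is surjective onto all of $Y$ rather than onto a proper closed subvariety — while the cohomological heart of the statement, namely that finite dominant morphisms of normal varieties preserve rational smoothness, is exactly what is borrowed from Renner.
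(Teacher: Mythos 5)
Your group-theoretic reduction is correct and, in fact, more careful than the paper itself: units map to units, $g(G_M)=G_N$ by density plus constructibility, surjective homomorphisms carry Borels onto Borels, and a finite morphism restricted to a closed subvariety mapping into a closed subvariety stays finite, so $g\vert_X : X\to Y$ is indeed finite and dominant (the paper simply writes $g\vert_X$ without verifying any of this). The gap is the final step. The ``variety-theoretic principle'' you attribute to \cite[Theorem 2.2]{Renner09} --- that for a finite dominant morphism of normal irreducible complex varieties the source is rationally smooth if and only if the target is --- is false. Noether normalization produces counterexamples: let $Y_0=\{x^3+y^3+z^3=0\}\subset \C^3$ be the affine cone over an elliptic curve; it is normal (a hypersurface with an isolated singularity), and a generic linear projection $Y_0\to \C^2$ is a finite surjective morphism onto a smooth variety, yet $Y_0$ is not rationally smooth at the vertex (its link is a nontrivial circle bundle over the elliptic curve, so $H^1(\mathrm{link},\Q)\cong \Q^2\neq 0$). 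Renner's theorem is a statement about finite dominant morphisms of \emph{reductive monoids with zero}, proved using the monoid structure; it is not a general fact about normal varieties. Moreover, it cannot simply be cited for $g\vert_X : X\to Y$, because Borel submonoids are not reductive monoids (their unit groups are solvable) --- extending Renner's statement to Borel submonoids is precisely the content of the lemma, so its cohomological heart cannot be ``borrowed'' in the way you propose.

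What the paper does instead, and what your proposal is missing, is to rerun Renner's argument in a setting where only an attractive torus action is required. One takes a central one-parameter subgroup of $B$ contracting $M$ to $0_X$, forms the projective quotients $\PP_X=(X\setminus\{0_X\})/\C^*$ and $\PP_Y=(Y\setminus\{0_Y\})/\C^*$, and applies Brion's criterion \cite[Lemma 1.3]{Brion99}: $X$ (resp.\ $Y$) is rationally smooth if and only if the Euler characteristic of $\PP_X$ (resp.\ $\PP_Y$) equals the number of its $T\times T$ (resp.\ $T'\times T'$) fixed points. These invariants are then matched across $g$: the torus fixed points lie in the closed $G\times G$-orbits $C(M)$ and $C(N)$, and $g$ restricts to a bijection $C(M)\to C(N)$, which forces the fixed-point counts and the Euler characteristics on the two sides to agree. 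It is this fixed-point and Euler-characteristic mechanism --- not finiteness and normality alone --- that makes rational smoothness transfer along $g$, and your proof needs some version of it to close the gap.
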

\begin{proof}
By abuse of notation, we will denote the restriction $g\vert_X$ by $g$ also. 
By our assumptions, the algebraic monoids $X$ and $Y$ have zero elements, denoted by $0_X$ and $0_Y$, respectively. 
Let $B$ denote the Borel subgroup contained in $X$, and let $T$ be a maximal torus contained in $B$. 
Then $0_X$ is the unique closed $B\times B$ orbit in $X$, hence, $X\setminus \{0\}$ is (rationally) smooth. 
Let $\mu : \C^* \times M \to M$ be a central (in $B$) one-parameter subgroup action on $M$ such that 
$\lim_{t\to 0} \mu(t,x) = 0_X$ for every $x\in M$. 
Then the quotients $\PP_X:=(X\setminus \{0_X\})/\C^*$ and $\PP_M := (M\setminus \{ 0_X \})/\C^*$ are 
projective $T\times T$ varieties such that $\PP_X \subsetneq \PP_M$. 
Furthermore, $\PP_X$ and $\PP_M$ are rationally smooth. 
Similarly, we have the rationally smooth, projective $T'\times T'$ varieties $\PP_Y\subsetneq \PP_N$, where $T'$ is the maximal torus in $g(T)\subset Y$.

By result of Brion~\cite[Lemma 1.3]{Brion99} we know that $X$ (resp. $Y$) is rationally smooth if and only if 
the Euler characteristic of $\PP_X$ (resp. the Euler characteristic of $\PP_Y$) is equal to the number of $T\times T$
fixed points in $\PP_X$ (resp. the number of $T'\times T'$ fixed points in $\PP_Y$).
Let us denote by $C(M)$ (resp. by $C(N)$) the closed $G\times G$-orbit in $\PP_M$ (resp. the closed $G'\times G'$-orbit in $\PP_N$),
where $G$ (resp. $G'$) is the unit group of $M$ (resp. of $N$). 
Since the $T\times T$ fixed points in $\PP_X$ lie in the closed intersection $\PP_X \cap C(M)$, 
and since $g |_{C(M)} : C(M)\to C(N)$ is a bijection, we see that the Euler characteristics of $X$ and $Y$ are equal.
In particular, $X$ is rationally smooth if and only if $Y$ is rationally smooth. 
\end{proof}

Two reductive monoids $M$ and $N$ are said to be {\em equivalent} if there exists a 
reductive monoid $L$ with two finite dominant morphisms $L\to M$ and $L\to N$.
\begin{comment}
\begin{center}
\begin{tikzpicture}
\node (a) at (0,1.5) {$L$};
\node (b) at (-2,0) {$M$};
\node (c) at (2,0) {$N$};
\draw[thick, ->] (a) -- (b);
\draw[thick, ->] (a) -- (c);
\end{tikzpicture}
\end{center}
\end{comment}
If $M$ and $N$ are equivalent monoids, then we will write $M\sim_0 N$. It is easy to verify that $\sim_0$ is an equivalence relation. 
Let $M$ be a reductive monoid with zero. 
According to~\cite[Theorem 2.4]{Renner09}, 
\begin{align}\label{A:iff}
\text{$M$ is rationally smooth $\iff$ $M\sim_0 \prod_{i=1}^r M_{n_i}$.}
\end{align}

We are now ready to prove the main result of this section. 
\begin{Theorem}\label{T:rationallysmooth}
Let $M$ be a rationally smooth reductive monoid with zero. Let $B$ be a Borel subgroup in $M$ and let $\overline{B}$ denote 
the corresponding Borel submonoid. Then $\overline{B}$ is a rationally algebraic semigroup. 
\end{Theorem}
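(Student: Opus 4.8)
The plan is to reduce the statement to the case of (products of) full matrix monoids, where the Borel submonoid is visibly smooth, and then to transport rational smoothness along the finite dominant morphisms supplied by the equivalence relation $\sim_0$ via Lemma~\ref{L:Renner's}.

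First I would record the base case. The Borel subgroup of $M_n$ is the group $B_n$ of invertible upper triangular matrices, and $B_n$ is dense in the linear subspace of all upper triangular matrices of $M_n$ (its complement in that subspace is the zero locus of the product of the diagonal coordinates). Hence the Borel submonoid $\overline{B_n}$ is exactly the affine space of upper triangular $n\times n$ matrices; in particular it is smooth, and therefore rationally smooth. Since the Borel submonoid of a product of reductive monoids is the product of the Borel submonoids of the factors, the Borel submonoid of $\prod_{i=1}^r M_{n_i}$ is again an affine space, hence rationally smooth.

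Next I would invoke the structure theory. Because $M$ is a rationally smooth reductive monoid with zero, the characterization (\ref{A:iff}) gives $M\sim_0 \prod_{i=1}^r M_{n_i}$ for suitable positive integers $n_1,\dots,n_r$. By the definition of $\sim_0$, there is a reductive monoid $L$ with zero together with two finite dominant morphisms of algebraic monoids $L\to M$ and $L\to \prod_{i=1}^r M_{n_i}$; each such morphism carries the zero of $L$ to the corresponding zero, since a dominant monoid homomorphism sends an absorbing element to an absorbing element. I would then apply Lemma~\ref{L:Renner's} twice. Applied to $L\to M$, it shows that the Borel submonoid of $L$ is rationally smooth if and only if $\overline{B}$ is; applied to $L\to \prod_{i=1}^r M_{n_i}$, it shows that the Borel submonoid of $L$ is rationally smooth if and only if the Borel submonoid of $\prod_{i=1}^r M_{n_i}$ is. Chaining these two equivalences with the base case yields that $\overline{B}$ is rationally smooth.

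The step I expect to require the most care is verifying that the hypotheses of Lemma~\ref{L:Renner's} genuinely hold along the two morphisms: that $L$ and the intermediate monoids are normal and possess zero elements, that the finite dominant morphisms restrict to the chosen Borel submonoids and are compatible with the Borel subgroup and maximal torus data used in the proof of that lemma, and that the particular choice of Borel in $L$ (and of its image) is immaterial because all Borel submonoids of a fixed reductive monoid are conjugate, hence isomorphic as varieties, so that rational smoothness of one is equivalent to rational smoothness of any other. Once these compatibility points are settled, the transport of rational smoothness is immediate and the theorem follows.
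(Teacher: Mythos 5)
Your proposal is correct and follows essentially the same route as the paper: invoke (\ref{A:iff}) to obtain the equivalence $M\sim_0 \prod_{i=1}^r M_{n_i}$, then apply Lemma~\ref{L:Renner's} twice along the two finite dominant morphisms out of $L$ to transport rational smoothness from the Borel submonoid of the matrix-monoid product to $\overline{B}$. The only difference is cosmetic: you spell out the base case (that the Borel submonoid of $M_n$ is the affine space of upper triangular matrices, hence smooth), which the paper states in one clause, and you flag the conjugacy-of-Borels and zero-element points that the paper handles with a ``without loss of generality.''
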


\begin{proof}
Since $M$ is rationally smooth, we know from (\ref{A:iff}) that there exists a reductive monoid $L$ admitting two finite dominant morphisms: 
\begin{center}
\begin{tikzpicture}
\node (a) at (0,1.5) {$L$};
\node at (-1,1.2) {$f$};
\node at (1,1.2) {$g$};
\node (b) at (-2,0) {$M$};
\node (c) at (2,0) {$\prod_{i=1}^r M_{n_i}$};
\draw[thick, ->] (a) -- (b);
\draw[thick, ->] (a) -- (c);
\end{tikzpicture}
\end{center}
Without loss of generality, we may assume that $L$ has a zero. 
Let $B_L$ be a Borel subgroup of $L$. As $f$ and $g$ are finite and dominant morphisms, they are surjective. 
In particular, the subgroups $f(B_L)$ and $g(B_L)$ are Borel subgroups in $M$ and $\prod_{i=1}^r M_{n_i}$, respectively. 
We set $X:= \overline{f(B_L)}$ and $Y:= \overline{g(B_L)}$. 
Then $X$ and $Y$ are Borel submonoids in $M$ and $\prod_{i=1}^r M_{n_i}$, respectively. 
Since $Y$ is (rationally) smooth, by Lemma~\ref{L:Renner's}, so is $\overline{B_L}$.
Once again by using Lemma~\ref{L:Renner's}, we see that $X$ is rationally smooth. 
This finishes the proof of our theorem. 
\end{proof}

As an application of Theorem~\ref{T:rationallysmooth}, we consider the symplectic monoid $MSp_n$. 
By the Renner's classification of rationally smooth simple group embeddings~\cite[Corollary 3.5]{RennerDescent}, 
$MSp_n$ is a rationally smooth semisimple monoid. 
Therefore, by Theorem~\ref{T:rationallysmooth}, its Borel submonoid is rationally smooth.

\section{Final Remarks: Nilpotent Subsemigroups}\label{S:Final}

In this section we will contrast some properties of the Borel submonoids of $M_n$ and $MSp_n$.
We begin with a general observation. 

\begin{Lemma}\label{L:tworequirements}
Let $M$ be a reductive monoid with the Bruhat-Chevalley-Renner decomposition $M = \sqcup_{r\in R} B\dot{r}B$,
where $B$ is a Borel subgroup in $M$, and $R$ is the Renner monoid of $M$. 
If an element $r$ from $R$ satisfies the following two properties, then every element of the orbit $B\dot{r}B$,
where $\dot{r}$ is a representative of $r$ in $\overline{N_G(T)}$, is nilpotent:
\begin{enumerate}
\item[(1)] $r$ is nilpotent in $R$, that is, $r^m=0$ for some $m\in \Z_+$;
\item[(2)] $r\lneq 1$.
\end{enumerate}
\end{Lemma}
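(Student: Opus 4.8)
The plan is to reduce the statement to a computation with triangular matrices by passing to a faithful representation. Fix a maximal torus $T\subseteq B$ and choose a faithful representation $\rho: M \to \mathrm{End}(V)$ that is a closed immersion of algebraic monoids with $\rho(0)=0$, together with a basis of $V$ consisting of $T$-weight vectors ordered by a linear extension of the weight order, so that $\rho(B)$ consists of upper triangular matrices and $\rho(T)$ of diagonal matrices. Since $\rho$ is injective and sends the zero of $M$ to the zero matrix, an element $y\in M$ is nilpotent if and only if the matrix $\rho(y)$ is nilpotent. Thus it suffices to show that $\rho(x)$ is a nilpotent matrix for every $x = b_1\dot{r}b_2\in B\dot{r}B$ with $b_1,b_2\in B$.

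I would then isolate two structural facts about $\rho(\dot{r})$. First, condition (2) gives $r\lneq 1$, whence $B\dot{r}B\subseteq \overline{B\cdot 1\cdot B}=\overline{B}$; since $\rho(\overline{B})$ lies in the closed set of upper triangular matrices, $\rho(\dot{r})$, and more generally every $\rho(x)$ with $x\in B\dot{r}B$, is upper triangular. Second, condition (1) forces the diagonal of $\rho(\dot{r})$ to vanish, and this is the crux of the argument. Because $\dot{r}\in \overline{N_G(T)}$, its image $\rho(\dot{r})$ is monomial with respect to the weight-space decomposition: there is a fixed $\sigma\in W$ such that $\rho(\dot{r})$ carries each weight space $V_\mu$ isomorphically onto $V_{\sigma\mu}$ or annihilates it. A nonzero within-block (diagonal) contribution can occur only at a weight $\mu$ with $\sigma\mu=\mu$ that is not annihilated, and on such a block $\rho(\dot{r})$ acts invertibly; then $\rho(\dot{r})^m$ would be nonzero for every $m$, contradicting $r^m=0$. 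Hence no weight space is simultaneously preserved and unannihilated, so $\rho(\dot{r})$ has zero diagonal.

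Combining the two facts, $\rho(\dot{r})$ is strictly upper triangular. It then remains only to invoke the elementary observation that a product (upper triangular)$\cdot$(strictly upper triangular)$\cdot$(upper triangular) is again strictly upper triangular: upper times strictly upper is strictly upper, and strictly upper times upper is strictly upper. Therefore $\rho(x)=\rho(b_1)\,\rho(\dot{r})\,\rho(b_2)$ is strictly upper triangular, hence nilpotent, and so $x$ is nilpotent in $M$. The hardest step is the vanishing of the diagonal of $\rho(\dot{r})$, that is, translating the algebraic condition $r^m=0$ in $R$ into the absence of a $T$-weight direction that $r$ both preserves and does not annihilate. In the concrete cases $M=M_n$ and $M=MSp_n$ this is transparent, since there $\rho(\dot{r})$ is a genuine partial permutation matrix and $r^m=0$ says precisely that the underlying partial permutation has no fixed points, so that $\dot{r}$ is already strictly upper triangular once $r\leq 1$.
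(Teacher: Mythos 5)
Your proof is correct, and its skeleton matches the paper's: pass to triangular matrices, show that the representative $\dot{r}$ becomes strictly upper triangular, and conclude with the fact that a product of upper triangular matrices with one strictly upper triangular factor is strictly upper triangular, hence nilpotent. Where you differ is in how strict upper triangularity is established. The paper embeds $M$ in $M_n$, conjugates $B$ into the upper triangular Borel, and reduces outright to the case $M=M_n$, where the Renner monoid is the rook monoid and hypotheses (1) and (2) say verbatim that $r$ is a strictly upper triangular $0/1$ matrix. You instead work with a general faithful representation $\rho$ normalized so that $\rho(0_M)=0$, get upper triangularity of the whole orbit from (2) and a closure argument, and kill the diagonal of $\rho(\dot{r})$ using the monomial action of $\overline{N_G(T)}$ on the $T$-weight spaces (a correct fact, justified by $\overline{N_G(T)}=\bigcup_{w\in W}\dot{w}\,\overline{T}$ together with $\overline{T}$ acting on each weight space by a scalar). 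Your version is more careful than the paper's at exactly the two points the paper glosses over: that the embedding can be chosen with $0_M\mapsto 0$ --- without which ``nilpotent in $M$'' is not the same as ``nilpotent matrix'' --- and that the general case really does follow from the $M_n$ case. On the other hand, your self-declared hardest step admits a one-line shortcut that avoids the weight-space analysis altogether: by (2), $\rho(\dot{r})$ is upper triangular; by (1), $\rho(\dot{r})^m=\rho(\dot{r}^m)=\rho(0_M)=0$; and a nilpotent upper triangular matrix has zero diagonal, since its diagonal entries are its eigenvalues. One small inaccuracy in your closing remark: for a partial permutation matrix, $r^m=0$ is equivalent to having no cycles of any length, not merely no fixed points; the two conditions coincide only in the presence of the upper triangularity coming from $r\leq 1$, which is how you use it, so nothing breaks.
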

\begin{proof}
Since $M$ is a linear algebraic monoid, it admits an embedding into $M_n$ for a sufficiently large positive integer $n$. 
By conjugating with an element of $GL_n$, we assume that $B$ is contained the upper triangular 
Borel submonoid of $M_n$. 
Clearly, if we can prove our assertion for $M= M_n$ and $B=B_n$, then the general case will follow. 
In this case, the Renner monoid is given by the rook monoid $\mc{R}_n$, 
and we can identify $\mc{R}_n$ as a submonoid of $M_n$. 
An element $r$ from $\mc{R}_n$ satisfies the two properties in our hypotheses if and only if it is a strictly upper triangular rook. 
But the product of an upper triangular matrix with a strictly upper triangular matrix is strictly upper triangular. 
Therefore, any element of $BrB$ is strictly upper triangular, hence, nilpotent. 
This finishes the proof of our assertion.
\end{proof}
We should note that we cannot replace any of the two requirements in Lemma~\ref{L:tworequirements}.
Indeed, if $r$ is not nilpotent, then any of its representatives $\dot{r}$, which is contained in $B\dot{r}B$, is not nilpotent. 
For the second item, we consider the matrices $r= \begin{bmatrix} 0 & 0 \\ 1 & 0 \end{bmatrix}$ and $b_2 = \begin{bmatrix} 
1 &  1 \\ 0 & 1\end{bmatrix}$. Then, we have $r b_2 = \begin{bmatrix} 0 &  0 \\ 1 & 1\end{bmatrix}$, which is not nilpotent.
Evidently, the set of nilpotent elements in a Borel submonoid is a closed subset. 

\begin{Definition}
The subvariety $\overline{B}^{nil} := \{  x\in \overline{B} :\ x^m = 0\text{ for some $m\in \Z_+$}\}$ will be called the {\em nilpotent semigroup}
of $\overline{B}$. 
\end{Definition}

\begin{Corollary}\label{C:subsemigroup}
Let $B$ be a Borel subgroup in a reductive monoid $M$ with zero. 
Then the nilpotent semigroup of $\overline{B}$ is a $B\times B$-stable algebraic subsemigroup of $M$.
\end{Corollary}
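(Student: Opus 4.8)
The plan is to establish the two assertions---that $\overline{B}^{nil}$ is $B\times B$-stable and that it is a closed subsemigroup---separately, in each case reducing to the concrete picture inside $M_n$ that was set up in the proof of Lemma~\ref{L:tworequirements}. First I would, following that reduction, fix an embedding $M\hookrightarrow M_n$ for a sufficiently large $n$ and, after conjugating by an element of $GL_n$, arrange that $B$ lies in the upper triangular Borel subgroup $B_n$; then $\overline{B}\subseteq \overline{B_n}$ consists of upper triangular matrices and the zero of $M$ is the zero matrix. This embedding is the common engine for what follows, because nilpotency is intrinsic (independent of the embedding) and, for an upper triangular matrix, is equivalent to having zero diagonal, that is, to being strictly upper triangular.

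For the $B\times B$-stability, the key observation is that $\overline{B}^{nil}$ is a union of $B\times B$-orbits. I would use the Bruhat-Chevalley-Renner decomposition of the Borel submonoid, $\overline{B} = \bigsqcup_{r\in R,\ r\leq 1} B\dot r B$, and show that such an orbit is contained in $\overline{B}^{nil}$ exactly when $r$ is nilpotent. In one direction, if $r$ is nilpotent then $r\neq 1$ (the identity is never nilpotent in a monoid with zero), so $r\lneq 1$, and Lemma~\ref{L:tworequirements} gives that the entire orbit $B\dot r B$ consists of nilpotent elements. In the other direction, if $r$ is not nilpotent, then its representative $\dot r\in B\dot r B$ is not nilpotent, as noted in the remark following Lemma~\ref{L:tworequirements}, so the orbit is not contained in $\overline{B}^{nil}$. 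Consequently $\overline{B}^{nil} = \bigsqcup_{r\leq 1,\ r\text{ nilpotent}} B\dot r B$ is a union of orbits, hence manifestly $B\times B$-stable.

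For the subsemigroup property, closedness is the observation recorded just before the definition: an element $x$ of $M\subseteq M_n$ is nilpotent if and only if $x^n=0$, so $\overline{B}^{nil}$ is the fiber over the zero matrix of the morphism $x\mapsto x^n$ on $\overline{B}$, hence a closed subvariety. It then remains to verify closure under multiplication. Since $\overline{B}$ is a submonoid of $M$, it is already closed under products; and in the $M_n$-picture every element of $\overline{B}^{nil}$ is strictly upper triangular, while the product of two strictly upper triangular matrices is again strictly upper triangular and therefore nilpotent. Thus $xy\in\overline{B}^{nil}$ whenever $x,y\in\overline{B}^{nil}$, so $\overline{B}^{nil}$ is a closed algebraic subsemigroup (a semigroup rather than a submonoid, since it contains the zero in place of an identity).

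The step I expect to be the main obstacle is the $B\times B$-stability, precisely because nilpotency is \emph{not} in general invariant under the two-sided $B\times B$-action on a reductive monoid; the example $r b_2$ exhibited after Lemma~\ref{L:tworequirements} shows that the hypothesis $r\leq 1$ cannot be dropped. The point that makes stability work here is that every orbit appearing in $\overline{B}$ already satisfies $r\leq 1$, which forces the clean dichotomy ``$r$ nilpotent $\Rightarrow$ the whole orbit is nilpotent'' versus ``$r$ not nilpotent $\Rightarrow$ the orbit meets the non-nilpotent locus.'' Some care is needed to invoke both directions of this dichotomy, since only the first of them is Lemma~\ref{L:tworequirements} itself.
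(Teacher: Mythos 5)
Your closedness and semigroup-closure arguments are sound and essentially match the paper's proof: the paper also obtains closedness from the polynomial equations $x^m=0$ and the semigroup property from the computation inside the proof of Lemma~\ref{L:tworequirements} (a product of upper triangular and strictly upper triangular matrices is strictly upper triangular). The gap is in your stability argument. To conclude that $\overline{B}^{nil}=\bigsqcup_{r\leq 1,\ r\ \mathrm{nilpotent}}B\dot{r}B$ --- equivalently, that $\overline{B}^{nil}$ is a union of orbits --- you must show that any orbit which \emph{meets} $\overline{B}^{nil}$ is \emph{contained} in it; that is, for $r\leq 1$ not nilpotent you need $B\dot{r}B\cap\overline{B}^{nil}=\emptyset$. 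What you actually prove in the ``other direction'' is only that such an orbit is not \emph{contained} in $\overline{B}^{nil}$, because the single element $\dot{r}$ fails to be nilpotent. That is strictly weaker: it leaves open the possibility of an orbit with non-nilpotent $r$ that nevertheless contains some nilpotent elements, and any such element would break your claimed equality. Your dichotomy ``orbit $\subseteq \overline{B}^{nil}$ iff $r$ nilpotent'' classifies which orbits lie inside the nilpotent locus, but by itself says nothing about orbits that could intersect it properly, so the sentence beginning ``Consequently'' does not follow from what precedes it.

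The missing step is true and easy to supply in the $M_n$ picture you set up: if $r\leq 1$ is not nilpotent, then $r$ is an upper triangular rook with a $1$ in some diagonal position $(i,i)$; since the diagonal of a product of upper triangular matrices is the product of their diagonals, every element $b_1\dot{r}b_2$ with $b_1,b_2\in B\subseteq B_n$ is upper triangular with nonzero $(i,i)$ entry, hence not nilpotent. Inserting this makes your argument complete. Note, however, that the paper short-circuits this orbit bookkeeping entirely: in the reduction of Lemma~\ref{L:tworequirements}, the nilpotent elements of $\overline{B}$ are precisely its strictly upper triangular elements, and multiplying a strictly upper triangular matrix on either side by elements of $B$ (invertible upper triangular matrices) keeps it strictly upper triangular; this single observation yields the $B\times B$-stability and the semigroup property simultaneously, with no appeal to the Bruhat--Chevalley--Renner decomposition.
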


\begin{proof}
By definition, $\overline{B}^{nil}$ is defined by the polynomial relations $x^m=0$ ($m\in \Z_+$),
therefore, it is a closed subset of $\overline{B}$.  
By Lemma~\ref{L:tworequirements}, we know that $\overline{B}^{nil}$ is $B\times B$-stable. 
In fact, the proof of this lemma shows that $\overline{B}^{nil}$ is a semigroup. 
\end{proof}

Next, we will show that $\overline{B_n}^{nil}$ is an irreducible variety. 
\begin{Proposition}\label{P:num_nilpotent}
The nilpotent semigroup of $M_n$ is an irreducible algebraic semigroup of dimension ${n\choose 2}$. 
\end{Proposition}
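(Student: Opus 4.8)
The plan is to identify $\overline{B_n}^{nil}$ explicitly as the space of strictly upper triangular matrices, and then read off irreducibility, the semigroup property, and the dimension directly, with essentially no geometry beyond linear algebra.

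First I would pin down the Borel submonoid itself. The Borel subgroup $B_n$ consists of the invertible upper triangular matrices, and these form a nonempty open subset of the linear space $U_n$ of \emph{all} upper triangular matrices in $M_n$; indeed, the complement of $B_n$ in $U_n$ is the hypersurface where the product of the diagonal entries vanishes. Since $U_n$ is an irreducible variety (it is a linear subspace) that is closed in $M_n$, and since $B_n$ is a dense open subset of it, the Zariski closure satisfies $\overline{B_n}=U_n$. Thus $\overline{B_n}$ is precisely the monoid of all upper triangular $n\times n$ matrices.

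Next I would characterize its nilpotent elements. For an upper triangular matrix $A=(a_{ij})$ the diagonal entries $a_{11},\dots,a_{nn}$ are exactly its eigenvalues, so $A$ is nilpotent if and only if every $a_{ii}=0$, i.e.\ if and only if $A$ is strictly upper triangular (in which case $A^n=0$). Hence $\overline{B_n}^{nil}$ coincides with the set $\mathfrak{n}$ of strictly upper triangular matrices. This $\mathfrak{n}$ is the linear subspace of $M_n$ spanned by the elementary matrices $E_{ij}$ with $i<j$, so it is an affine space, hence irreducible, of dimension $\#\{(i,j):1\le i<j\le n\}=\binom{n}{2}$. That $\mathfrak{n}$ is closed under multiplication (a product of two strictly upper triangular matrices is again strictly upper triangular) makes it an algebraic semigroup; alternatively this follows from Corollary~\ref{C:subsemigroup}.

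I do not expect a serious obstacle here: the only points demanding care are the verification that the closure of $B_n$ fills out all of $U_n$ rather than some proper subvariety, and the eigenvalue argument identifying nilpotency with strict upper triangularity. Both are routine, and once $\overline{B_n}^{nil}=\mathfrak{n}$ is established the three claimed properties (irreducibility, semigroup structure, and $\dim=\binom{n}{2}$) are immediate.
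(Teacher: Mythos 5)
Your proof is correct, but it takes a genuinely different route from the paper's. You identify $\overline{B_n}^{nil}$ outright as the linear space $\mathfrak{n}$ of strictly upper triangular matrices: first $\overline{B_n}=U_n$ (since $B_n$ is the complement of the hypersurface $\prod_i a_{ii}=0$ in the affine space $U_n$), then nilpotency for triangular matrices means vanishing diagonal, and all three claims (irreducibility, dimension ${n\choose 2}$, semigroup property) follow from $\mathfrak{n}$ being a linear subspace closed under multiplication. The paper never passes to this explicit linear model; instead it works orbit by orbit, using Lemma~\ref{L:tworequirements} to write $\overline{B_n}^{nil}=\bigsqcup B_n r B_n$ over strictly upper triangular rooks $r$, observing that the rook $r_0=(0,1,2,\dots,n-1)$ dominates all others in the Bruhat--Chevalley--Renner order, so that $\overline{B_n}^{nil}=\overline{B_n r_0 B_n}$ is the closure of a single irreducible orbit of dimension $\ell(r_0)=1+2+\cdots+(n-1)={n\choose 2}$, with the semigroup property coming from Corollary~\ref{C:subsemigroup}. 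Your argument is more elementary and entirely self-contained, which is a genuine advantage here. What the paper's approach buys is the structural explanation that drives the rest of Section~\ref{S:Final}: irreducibility holds precisely because the poset of nilpotent rooks has a \emph{unique} maximal element, and this is exactly what fails for $MSp_n$ (Figure~\ref{F:Figure2} shows several maximal elements), explaining why the symplectic nilpotent semigroup is reducible. Your linear-algebra identification, while cleaner for $M_n$, does not transfer to that contrast.
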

\begin{proof}
By Lemma~\ref{L:tworequirements}, we know that $\overline{B_n}^{nil}= \bigsqcup B_n r B_n$, where the union is 
over all strictly upper triangular rooks in $\mc{R}_n$. It is easy to check that 
\begin{enumerate}
\item $r_0 := ( 0, 1,2,\dots, n-1)$ is a strictly upper triangular rook;
\item if $r$ is a strictly upper triangular rook, then $r\leq r_0$.
\end{enumerate}
These two conditions imply that $\overline{B_n r_0 B_n } = \overline{B_n}^{nil}$. 
It is easy to check that $\ell(r_0) = 1+ 2 + \cdots + (n-1) = {n\choose 2}$.
Since the orbit $B_n r_0 B_n$ is an irreducible variety, so is $\overline{B_n}^{nil}$. 
Thus, in light of Corollary~\ref{C:subsemigroup}, the proofs of our assertions are finished.
\end{proof}

Unfortunately the nice situation as in Proposition~\ref{P:num_nilpotent} does not hold for the nilpotent semigroup of $\overline{B_{Sp_n}}$. 
It turns out that $\overline{B_{Sp_n}}^{nil}$ has many irreducible components in varying dimensions.
See Figure~\ref{F:Figure2} for an example.
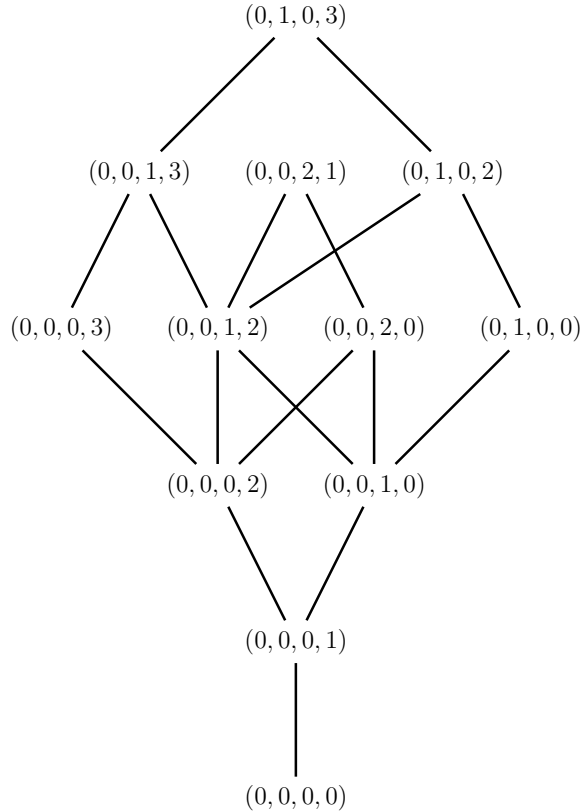
\begin{figure}[htp]
\begin{center}
\scalebox{.8}{
\begin{tikzpicture}[scale=.65]

\node at (0,20) (f3) {$(0,1,0,3)$};

\node at (-4,16) (e2) {$(0,0,1,3)$};
\node at (0,16) (e3) {$(0,0,2,1)$};
\node at (4,16) (e5) {$(0,1,0,2)$};

\node at (-6,12) (d1) {$(0,0,0,3)$};
\node at (-2,12) (d2) {$(0,0,1,2)$};
\node at (2,12) (d3) {$(0,0,2,0)$};
\node at (6,12) (d4) {$(0,1,0,0)$};

\node at (-2,8) (c1) {$(0,0,0,2)$};
\node at (2,8) (c2) {$(0,0,1,0)$};

\node at (0,4) (b1) {$(0,0,0,1)$};

\node at (0,0) (a) {$(0,0,0,0)$};

\draw[-, very thick] (a) to (b1);
\draw[-, very thick] (b1) to (c1);
\draw[-, very thick] (b1) to (c2);
\draw[-, very thick] (c1) to (d1);
\draw[-, very thick] (c1) to (d2);
\draw[-, very thick] (c1) to (d3);

\draw[-, very thick] (c2) to (d2);
\draw[-, very thick] (c2) to (d3);
\draw[-, very thick] (c2) to (d4);

\draw[-, very thick] (d1) to (e2);

\draw[-, very thick] (d2) to (e2);
\draw[-, very thick] (d2) to (e3);
\draw[-, very thick] (d2) to (e5);
\draw[-, very thick] (d3) to (e3);
\draw[-, very thick] (d4) to (e5);

\draw[-, very thick] (e2) to (f3);
\draw[-, very thick] (e5) to (f3);

\end{tikzpicture}
}
\end{center}
\caption{The Hasse diagram of $\overline{\mc{B}_{Sp_4}}^{nil}$.}
\label{F:Figure2}
\end{figure}

\bibliography{References}
\bibliographystyle{plain}

\end{document}